\let\epsilon\varepsilon
\let\phi\varphi
\let\theta\vartheta
\newtheorem{mytheorem}{Theorem}[section]
\newtheorem{myprop}[mytheorem]{Proposition}
\theoremstyle{definition}
\newtheorem{mydef}[mytheorem]{Definition}
\newtheorem{myre}[mytheorem]{Remark}
\newtheorem{mylemma}[mytheorem]{Lemma}
\newcommand{\R}{\mathbb{R}}
\newcommand{\N}{\mathbb{N}}
\newcommand{\cA}{\mathcal{A}}
\newcommand{\cQ}{\mathcal{Q}}
\newcommand{\cS}{\mathcal{S}}
\newcommand{\ca}{{a}}
\newcommand{\ce}{\mathbf{e}}
\newcommand{\cn}{{n}}
\newcommand{\cv}{{v}}
\newcommand{\Span}{\operatorname{span}}
 \newcommand{\bN}{\mathbb{N}}
\newcommand{\norm}[1]{\left\lVert #1\right\rVert}
\newcommand{\bR}{\mathbb{R}}
\newcommand{\bZ}{\mathbb{Z}}
\newcommand{\supp}{\operatorname{supp}}
\newcommand{\qp}{\mathcal{Q}}
\newcommand{\ptt}{\mathbb{I}}
\newcommand{\brac}[1]{\langle #1\rangle}
\newcommand{\ve}{\varepsilon}
\renewcommand{\epsilon}{\varepsilon}
\begin{document}
\title[Unconditional bases for homogeneous $\alpha$-modulation spaces ]{Unconditional bases for homogeneous\\ $\alpha$-modulation type spaces } \author{Morten Nielsen }   \date{\today}
\begin{abstract}
In this article we construct orthonormal bases compatible with bi-variate homogeneous $\alpha$-modulation
spaces and  the associated spaces of Triebel-Lizorkin type. The construction is based on generating a separable $\alpha$-covering and using
carefully selected tensor products of univariate brushlet functions with regards to this covering.
We show that the associated systems form an unconditional bases for the homogeneous $\alpha$-spaces  of Triebel-Lizorkin type.
\end{abstract}
\subjclass[2010]{42B35, 42C15, 41A17}
\keywords{Decomposition space, unconditional basis, smoothness space, Triebel-Lizorkin type space, $\alpha$-modulation space}
\maketitle

\section{Introduction}

Unconditional bases for smoothness spaces play an important role for many applications  as the bases often provide simple characterizations of the space in terms of certain sparseness conditions. For example, smoothness measured in a Besov space is equivalent to a certain sparseness of a wavelet expansion \cite{Meyer1992}. Moreover, norm characterizations often allow us to identify certain smoothness spaces as nonlinear approximation spaces \cite{Gribonval2004,Kyriazis2002}. As a consequence we gain a better understanding of how to compress smooth functions by using the  sparse representation of the function in the unconditional basis \cite{DeVore1992,DeVore1992a}.

The $\alpha$-modulation spaces $M^{s,\alpha}_{p,q}(\bR^d)$, $\alpha\in
[0,1]$, form a parameterized family of smoothness spaces defined on $\bR^d$ that
include the Besov and modulation spaces as special cases, corresponding
to $\alpha=1$ and $\alpha=0$, respectively.  The spaces are built from
the same type of scheme arising from different segmentations of the
frequency space. The $\alpha$-parameter determines the nature of the
segmentation. For example, the Besov spaces ($\alpha=1$) correspond to
a dyadic segmentation of the frequency space, while the modulation
spaces ($\alpha=0)$ correspond to a uniform covering. The intermediate
cases correspond to ``polynomial type'' segmentations of the frequency
space. The classical $\alpha$-modulation spaces are inhomogeneous spaces in the sense that the underlying segmentation of the
frequency space cover the zero-frequency (so a natural low-pass filter is integrated in the representation). Recently, however, the $\alpha$-modulation spaces have been extended to a homogeneous setup. The main contribution of of this note is to present a construction of unconditional bases for homogeneous $\alpha$-modulation spaces.

The $\alpha$-modulation spaces were introduced by Gr\"obner
\cite{Groebner1992}, and it was pointed out by 
Feichtinger and Gr\"obner
\cite{Feichtinger1987,Feichtinger1985} that Besov and modulation spaces are
special cases of an abstract construction, the so-called decomposition
type Banach spaces. The coverings giving rise to
$\alpha$-modulation spaces have also been considered 
by P{\"a}iv{\"a}rinta and Somersalo in \cite{Paeivaerinta1988} as a tool to study
 pseudo-differential operators. The close connection between decomposition spaces and  classical smoothness space such as modulation spaces was  first pointed out by Triebel \cite{Triebel1983a}. Triebel's work  later inspired a more general treatment of decomposition smoothness spaces  \cite{Borup2008,Borup2007}. Another benefit of the connection to the general theory of decomposition spaces it that one can easily construct associated smoothness spaces of Triebel-Lizorkin type ($\alpha$-TL spaces).

The main contribution of the present paper is to offer a
construction of an orthonormal basis for $L_2(\bR^2)$ that extends to an unconditional basis for bi-variate
$\alpha$-modulation spaces and for the associated $\alpha$-TL spaces.We believe that  our construction is the first
example of a non-redundant representation system for multivariate
$\alpha$-TL spaces. Orthonormal bases for classical (inhomogeneous) $\alpha$-modulation spaces were constructed by the author in \cite{Nielsen2010}. This construction was later extended to anisotropic bi-variate setting by Rasmussen \cite{Rasmussen2012}.

The orthonormal basis is constructed using a carefully calibrated
tensor product approach based on so-called univariate brushlet
systems.  Brushlets are the image of a local trigonometric basis
under the Fourier transform, and such systems were introduced by Laeng
\cite{Laeng1990}. Later Coifman and Meyer \cite{Meyer1997a} used
brushlets as a tool for image compression.  In \cite{Borup2003}, Borup and
Nielsen used the freedom to choose the frequency localization of a
brushlet system to construct (orthonormal) unconditional brushlet
bases for the univariate $\alpha$-modulation spaces. 
Using the orthonormal basis for bi-variate
$\alpha$-modulation spaces, we give a    
characterization of the bi-variate $\alpha$-modulation spaces in terms
a sparseness condition on the expansion coefficients, and we  also 
identify the $\alpha$-modulation spaces
as approximation spaces associated with nonlinear $m$-term
approximation.

\section{Bi-variate brushlet bases} 
Given an orthonormal basis $\{f_k\}_k$ for $L_2(\bR)$, a universal method to created an associated orthonormal basis for $L_2(\bR^2)$ is to consider the  tensor product basis $\{f_k\otimes f_{k'}\}_{k,k'}$. While this works very well for e.g.\ the trigonometric system on a cube, the straightforward tensor product approach can be considered more problematic for wavelets and similar systems as basis elements with long "skinny" support in the frequency plane are created. Such elements are not well-adapted for analysis of classical isotropic smoothness spaces such as Besov or Triebel-Lizorkin spaces.

In this section we wish to avoid creating elements with "skinny" support in frequency, but still use a tensor product construction to obtain an orthonormal basis for $L_2(\bR^2)$. We will accomplish this by modifying the tensor product construction carefully by keeping track of the shape of the system in the frequency plane by extracting subsystems from a sequence of so-called univariate brushlet bases. We also mention that the analysis later in the paper would have been much simplified if one could have used localised orthonormal  exponential basis. But, unfortunately, this is not possible due to the Balian-Low theorem.  

To keep the notation manageable, we consider only the bi-variate case in this note, but the
reader can verify that the basic idea behind the construction can be adapted to the
general multivariate case.

\subsection{Univariate brushlets} We begin by introducing brushlet in a univariate setting. 
Each univariate brushlet basis is associated with a partition of the
frequency axis. The partition can  be chosen with almost
no restrictions, but in order to have good properties of the
associated basis we need to impose some growth conditions on the
partition. 
We have the following definition.

\begin{mydef}
A family $\ptt$ of intervals is called a {\it disjoint covering} of $\R$ if
it consists of a countable set of pairwise disjoint half-open intervals
$I=[\alpha_I,\alpha'_I)$, $\alpha_I<\alpha'_I$, such
that $\cup_{I\in \ptt}I=\R$. If, furthermore, each interval in $\ptt$
has a unique adjacent interval in $\ptt$ to the left and to the right,
and there exists a constant $A>1$ such that
\begin{equation}\label{eq:growthcond}
A^{-1}\leq \frac{|I|}{|I'|} \leq A,\qquad \text{for
  all adjacent}\; I,I'\in \ptt,
\end{equation}
we call $\ptt$ a {\it moderate disjoint covering} of $\R$.
\end{mydef}

Given a moderate disjoint covering $\ptt$ of $\R$, assign to each interval $I\in \ptt$ a cutoff
radius $\varepsilon_I>0$ at the left endpoint and a cutoff radius
$\varepsilon'_I>0$ at the right endpoint, satisfying
\begin{equation}\label{eq:epsilon}
\begin{cases}
\text{(i)}& \ve'_I= \ve_{I'}\; \text{whenever}\;
\alpha'_I=\alpha_{I'}\\
\text{(ii)}& \ve_I+\ve'_I\leq |I|\\
\text{(iii)}& \ve_I\geq c |I|,
\end{cases}
\end{equation}
with $c>0$ independent of $I$.

We are now ready to define the brushlet system. For each $I\in \ptt$,
we 
will  construct a smooth bell function localized in a
neighborhood of this interval. Take a
non-negative ramp function $\rho \in C^\infty(\R)$ satisfying
\begin{equation}\label{eq:ramp}
\rho(\xi)=\left\{ \begin{array}{ll}
0&\mbox{for}\; \xi \leq -1,\\
1&\mbox{for}\; \xi \geq 1,
\end{array} \right.
\end{equation}
with the property that
\begin{equation}\label{eq:ramp2}
\rho (\xi)^2+\rho(-\xi)^2=1\qquad \text{for all}\; \xi \in\R.
\end{equation}
Define for each $I = [\alpha_I,\alpha'_I)\in \ptt$ the {\it bell function}
\begin{equation}\label{eq:bell}
b_I(\xi) := \rho \bigg( \frac{\xi-\alpha_I}{\ve_I}\bigg)\rho
\bigg( \frac{\alpha'_I-\xi}{\ve'_I}\bigg) .
\end{equation}
Notice that $\supp (b_I) \subset [\alpha_I-\ve_I,\alpha'_I+\ve'_I]$ and
$b_I(\xi)=1$ for $\xi \in [\alpha_I+\ve_I,\alpha'_I-\ve'_I]$.
Now the set of local cosine functions
\begin{equation}\label{eq:brush1}
\hat{w}_{n,I}(\xi) =\sqrt{\frac{2}{|I|}} b_I(\xi)
\cos\biggl( \pi \bigl( n+{\textstyle \frac12}\bigr)
\frac{\xi- \alpha_I}{|I|} \biggr) ,\quad n\in \N_0,\quad I\in \ptt,
\end{equation}
 with $\bN_0:=\bN\cup \{0\}$, constitute an orthonormal basis for $L_2(\bR)$, see e.g.\
\cite{Auscher1992}. We 
call the collection $\{ w_{n,I}\colon I\in\ptt, n\in\N_0\}$ a
{\it brushlet system}. The brushlets also have an explicit
representation in the time domain. Define the set of {\it central
  bell functions} $\{ g_I\}_{I\in \ptt}$ by
\begin{equation}\label{eq:gb}
\hat{g}_I(\xi) := \rho \biggl(
\frac{|I|}{\ve_I} \xi\biggr) \rho \biggl( \frac{|I|}{\ve'_I} (1-\xi)\biggr),
\end{equation}
such that $b_I(\xi) = \hat{g}_I\bigl(|I|^{-1}(\xi -\alpha_I)\bigr)$,
 and let for notational convenience
$$e_{n,I}:= \frac{\pi \bigl( n+{\textstyle
    \frac12}\bigr)}{|I|},\qquad
I\in \ptt,\; n\in \N_0.$$
Then,
\begin{equation}
w_{n,I}(x)
= \sqrt{\frac{|I|}{2}} e^{i\alpha_Ix} \bigl\{ g_I\bigl(
  |I|(x +e_{n,I}) \bigr) + g_I\bigl( |I|(x -e_{n,I}) \bigr)
  \bigr\}.\label{eq:gw}
\end{equation}

By a straight forward calculation it can be verified (see
\cite{Borup2003}) that for $r\geq 1$ there exists a constant $C:=C(r)<\infty$, independent
of $I\in \ptt$, such that  
\begin{equation}\label{eq:gdecay}
|g_I(x)|\leq C(1+|x|)^{-r}.
\end{equation}  Thus a
brushlet $w_{n,I}$ essentially consists of two well localized humps at
the points $\pm e_{n,I}$.

Given a bell function $b_I$, define an
operator $\mathcal{P}_I:L_2(\bR) \rightarrow L_2(\bR)$ by
\begin{equation}
  \label{eq:proje}
  \widehat{\mathcal{P}_If}(\xi) := b_I(\xi)\bigl[ b_I(\xi)\hat{f}(\xi) +
b_I(2\alpha_I-\xi)\hat{f}(2\alpha_I-\xi)- b_I(2\alpha'_I-\xi)\hat{f}(2\alpha'_I-\xi)\bigr].
\end{equation}
It can be verified that $\mathcal{P}_I$ is an orthogonal projection, mapping
$L_2(\bR)$ onto $\overline{\Span}\{w_{n,I}\colon n\in \N_0\}$.  In
Section \ref{s:onb}, we will need some of the finer properties of the
operator given by \eqref{eq:proje}. Let us list properties here, and
refer the reader to \cite[Chap.\ 1]{Hernandez1996} for a more detailed
discussion of the properties of local trigonometric bases.

 Suppose $I = [\alpha_I,\alpha'_I)$
and $J=[\alpha_J,\alpha_J')$ are two adjacent compatible intervals
(i.e., $\alpha'_I=\alpha_J$ and $\epsilon_I'=\epsilon_J$). Then it holds true that
\begin{equation}
  \label{eq:summm}
  \widehat{\mathcal{P}_If}(\xi)+\widehat{\mathcal{P}_Jf}(\xi)=\hat{f}(\xi),\qquad
\xi\in[\alpha_I+\epsilon_I,\alpha_J'-\epsilon_J'],\qquad f\in L_2(\bR).
\end{equation}
We can verify \eqref{eq:summm} using the fact that
$b_I\equiv 1$ on $[\alpha_I+\epsilon_I,\alpha'_I-\epsilon_I']$ and that
$b_J\equiv 1$ on $[\alpha_J+\epsilon_J,\alpha_J'-\epsilon_J']$,
together with the fact that 
$$\text{supp}\big(b_I(\cdot)b_I(2\alpha_I-\cdot)\big)\subseteq
[\alpha_I-\epsilon_I,\alpha_I+\epsilon_I]$$
and
$$\text{supp}\big(b_I(\cdot)b_I(2\alpha_I'-\cdot)\big)\subseteq
[\alpha_I'-\epsilon_I',\alpha_I'+\epsilon_I'].
$$
 For 
$\xi\in[\alpha_I'-\epsilon_I',\alpha_J+\epsilon_J]$ we notice that 
\begin{align}
\widehat{\mathcal{P}_If}(\xi)+\widehat{\mathcal{P}_Jf}(\xi)=
[&b_I^2(\xi)+b_J^2(\xi)(\xi)]\hat{f}(\xi)\notag \\&+
\label{eq:add} b_J(\xi)b_J(2\alpha'-\xi)\hat{f}(2\alpha'-\xi)-
b_I(\xi)b_I(2\alpha'-\xi)\hat{f}(2\alpha'-\xi).  
\end{align}
We can then conclude that \eqref{eq:summm} holds true  using the
following facts (see \cite[Chap.\ 1]{Hernandez1996})
$$b_I(\xi)=b_J(2\alpha_I'-\xi),\qquad b_J(\xi)=b_I(2\alpha_J'-\xi),\qquad
\text{for }\xi\in [\alpha_I'-\epsilon_I',\alpha_J+\epsilon_J],$$
and
$$b_I^2(\xi)+b_J^2(\xi)=1,\qquad \text{for } \xi\in [\alpha_I+\epsilon_I,\alpha_J'-\epsilon_J'].$$
Moreover, 
\begin{equation}\label{eq:proj}\mathcal{P}_I+\mathcal{P}_J=\mathcal{P}_{I\cup J}\end{equation} with the $\epsilon$-values 
$\epsilon_I$ and $\epsilon_J'$ for $I\cup J$.

Finally, for a rectangle $Q=I\times J\subset \bR^2$ with 
$I = [\alpha_I,\alpha'_I)$
and $J=[\alpha_J,\alpha_J')$, 
 we define
 $P_Q=\mathcal{P}_I\otimes \mathcal{P}_J$. Clearly, $P_Q$
 is a projection operator $P_Q\colon L_2(\bR^2) \rightarrow
 \overline{\operatorname{span}}\{w_{i,I}\otimes w_{j,J}\colon i,j\in \bN_0\}.$

Notice that, 
\begin{equation}
  \label{eq:operatorP}
  P_Q = b_Q(D)\Bigl[
(\operatorname{Id}+R_{\alpha_I}-R_{\alpha_{I}'})
\otimes (\operatorname{Id}+R_{\alpha_J}-R_{\alpha_{J}'})
\Bigr]b_Q(D),
\end{equation}
where 
\begin{equation}\label{eq:S}
\widehat{b_Q(D)f} := b_Q\hat{f},
\end{equation}
with  $b_Q:=b_I\otimes b_J$, and $R_af(x) := e^{i2a}f(-x)$, 
$x,a\in \bR$. The corresponding orthonormal tensor product basis of brushlets is given by
$$w_{n,Q}:=w_{n_1,I}\otimes w_{n_2,J},\qquad n=(n_1,n_2)\in \bN_0^2.$$

\subsection{Structured \texorpdfstring{$\alpha$}{alpha}-coverings and bi-variate brushlet systems}\label{s:onb}
We now turn to the task of creating bi-variate systems with a very specific time-frequency structure that turns out to be well-adapted for the analysis of homogeneous $\alpha$-modulation spaces. We are going to fix $0\leq \alpha<1$.  In this section, $\alpha$ can be considered just a parameter that can be used to "tune" the specific time-frequency properties of the resulting bi-variate system. We first consider the following subsets of the real axis, with endpoints that are compatible with standard univariate $\alpha$-coverings, see \cite{Borup2006b}, 
$$A_j:= \big[-j^{\frac{1}{1-\alpha}},j^{\frac{1}{1-\alpha}}\big),\qquad j=1,2,\ldots$$
For the low frequencies we will need the following subsets
$$A_j:= \big[-|j|^{-\frac{1}{1-\alpha}},|j|^{-\frac{1}{1-\alpha}}\big), \qquad j=-1,-2,\ldots$$
We will need to create additional intervals for the final covering. For this we make a further 
subdivision of $[-j^{\frac{1}{1-\alpha}},j^{\frac{1}{1-\alpha}}]$ into $2j/r_1+1$ intervals, where $r_1$ is chosen sufficiently small and such that 
$2|j|/r_1=2N_j\in 2\bN$. We write
$$[-j^{\frac{1}{1-\alpha}},j^{\frac{1}{1-\alpha}}]=I_{j,-N_j}\cup I_{j,-N_j+1}\cup\cdots\cup I_{j,N_j},$$
where $I_{j,n}:=[r_{j,n},r_{j,n+1}]$, $j=-N_j,\ldots,N_j$, and 
 we impose the particular "endpoint"-choices  $r_{j,-N_j+1}=-(j-1)^{\frac{1}{1-\alpha}}$ and $r_{j,N_j-1}=(j-1)^{\frac{1}{1-\alpha}}$, i.e.,
\begin{equation}\label{eq:compatible}
  I_{j,-N_j}=[-j^{\frac{1}{1-\alpha}},-(j-1)^{\frac{1}{1-\alpha}}] \text{ and }I_{j,N_j}=[(j-1)^{\frac{1}{1-\alpha}},j^{\frac{1}{1-\alpha}}].
 \end{equation}
This is done to ensure seamless "gluing" later on when we create bi-variate systems.  
We now repeat the process for $[-|j|^{-\frac{1}{1-\alpha}},|j|^{-\frac{1}{1-\alpha}}]$, $j\in\{-1,-2,\ldots\}$, and make a division into 
into $2|j|/r_1+1$ intervals.
We write, for $j\in\{-1,-2,\ldots\}$,
$$[-|j|^{-\frac{1}{1-\alpha}},|j|^{-\frac{1}{1-\alpha}}]=I_{j,-N_j}\cup I_{j,-N_j+1}\cup\cdots\cup I_{j,N_j},$$
where $I_{j,n}:=[r_{j,n},r_{j,n+1}]$, $j=-N_j,\ldots,N_j$, and we again impose particular "endpoint"-choices  $r_{j,-N_j+1}=-(|j|+1)^{-\frac{1}{1-\alpha}}$ and $r_{j,N_j-1}=(| j|+1)^{-\frac{1}{1-\alpha}}$, i.e.,
 \begin{equation}\label{eq:compatible2}
 I_{j,-N_j}=[-|j|^{-\frac{1}{1-\alpha}},-(|j|+1)^{-\frac{1}{1-\alpha}}] \text{ and }I_{j,N_j}=[(|j|+1)^{-\frac{1}{1-\alpha}},|j|^{-\frac{1}{1-\alpha}}].
  \end{equation}
For each interval $I=[r_{j,s},r_{j,s+1}]$, we associate a corresponding brushlet system with left $\epsilon$-value 
$\frac{1}{100}|r_{j,s}|^\alpha$ and right $\epsilon$-value $\frac{1}{100}|r_{j,s+1}|^\alpha$. The scaling factor  $\frac{1}{100}$ has been chosen to ensure that \eqref{eq:epsilon} is satisfied.

We now consider the rectangular ``annuli'' given by
$$\mathcal{A}_j=\mathcal{A}_j^L\cup \mathcal{A}_j^R\cup \mathcal{A}_j^T\cup \mathcal{A}_j^B,\qquad  j\in \bZ\backslash \{0\},$$
with
$$\mathcal{A}_j^L=\left\{ I_{j,-N_j}\times I_{j,n}\right\}_{-N_j \leq n\leq N_j },\qquad \mathcal{A}_j^R=\left\{ I_{j,N_j}\times I_{j,n}\right\}_{-N_j \leq n\leq N_j }$$
and
$$\mathcal{A}_j^T=\left\{ I_{j,n}\times I_{j,N_j}\right\}_{-N_j < n< N_j },\qquad \mathcal{A}_j^B=\left\{ I_{j,n}\times I_{j,-N_j}\right\}_{-N_j < n< N_j }.$$
For notational convenience, we put $\mathcal{A}_0:=\emptyset$. The following result confirms that one can build orthonormal bi-variate brushlet bases based on the covering of $\bR^2$ given by the sets $\{\mathcal{A}_j\}$.  
\begin{myprop}
The system $\{w_{n,Q}: n\in \bN_0^2, Q\in \mathcal{A}_j, j\in \bZ\}$ forms an orthonormal basis for $L_2(\bR^2)$.
\end{myprop}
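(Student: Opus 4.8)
\medskip
\noindent\textbf{Proof plan.}
The statement will follow from two facts about the operators $P_Q=\mathcal P_I\otimes\mathcal P_J$ attached to the rectangles $Q=I\times J$ of the covering: \textbf{(A)} the $P_Q$ are mutually orthogonal projections, and \textbf{(B)} $\sum_Q P_Q=\operatorname{Id}$ on $L_2(\bR^2)$. Since $\mathcal P_I$ is the orthogonal projection onto $\overline{\operatorname{span}}\{w_{n,I}:n\in\bN_0\}$, on which $\{w_{n,I}\}_n$ is an orthonormal basis, $P_Q$ is the orthogonal projection onto $\overline{\operatorname{span}}\{w_{n,Q}:n\in\bN_0^2\}$, of which $\{w_{n,Q}\}_n$ is an orthonormal basis; hence (A) and (B) give the orthogonal decomposition $L_2(\bR^2)=\bigoplus_Q\operatorname{ran}P_Q$, and the union of the orthonormal bases $\{w_{n,Q}\}_n$ of the summands is then an orthonormal basis of $L_2(\bR^2)$. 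Throughout I use the consequence of \eqref{eq:proj} that $\mathcal P_I\mathcal P_J=0$ for adjacent compatible $I,J$ (as then $\mathcal P_I+\mathcal P_J=\mathcal P_{I\cup J}$ is a projection and $\mathcal P_I,\mathcal P_J$ are self-adjoint).

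For (A), $P_QP_{Q'}=(\mathcal P_I\mathcal P_{I'})\otimes(\mathcal P_J\mathcal P_{J'})$, so for $Q\ne Q'$ it suffices that $\mathcal P_I\mathcal P_{I'}=0$ or $\mathcal P_J\mathcal P_{J'}=0$; this is clear when the bell supports $\operatorname{supp}(b_I\otimes b_J)$, $\operatorname{supp}(b_{I'}\otimes b_{J'})$ are disjoint. If $Q,Q'$ lie in one annulus $\mathcal A_j$, all four intervals lie in the chain $\{I_{j,n}\}_n$ subdividing $A_j$ and differ in some coordinate, where distinct chain members are adjacent compatible or non-adjacent, so a factor vanishes. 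If $Q\in\mathcal A_j$, $Q'\in\mathcal A_{j'}$ with the squares $A_j\times A_j$, $A_{j'}\times A_{j'}$ separated by an intervening annulus, then the bell supports are disjoint, because \eqref{eq:epsilon} keeps the cutoff radii small relative to the annulus widths. The crucial case is $Q,Q'$ in consecutive annuli $\mathcal A_j,\mathcal A_{j-1}$, which abut along $\partial(A_{j-1}\times A_{j-1})$: every annulus rectangle is ``flat'' in at least one coordinate (spanning one of the two outermost subdivision intervals), the endpoint choices \eqref{eq:compatible}/\eqref{eq:compatible2} make those outermost intervals at consecutive levels abut compatibly, and the cutoff radii are too small for an outermost interval at level $j$ to reach an interior interval at level $j-1$; running through the edges and corners of $\partial(A_{j-1}\times A_{j-1})$ one sees that whenever the $\epsilon$-enlargements of $Q,Q'$ meet, in some coordinate the two intervals are distinct and adjacent compatible (or disjoint), so again a factor vanishes.

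For (B), each $\mathcal A_j$ is finite. Merging adjacent compatible intervals via \eqref{eq:proj}, the left/right columns of $\mathcal A_j$ sum to $\mathcal P_{I_{j,-N_j}}\otimes\mathcal P_{A_j}$ and $\mathcal P_{I_{j,N_j}}\otimes\mathcal P_{A_j}$, and the top/bottom rows to $\mathcal P_{A_{j-1}}\otimes\mathcal P_{I_{j,N_j}}$ and $\mathcal P_{A_{j-1}}\otimes\mathcal P_{I_{j,-N_j}}$ --- here $\{I_{j,n}\}_n$ merges to $A_j$, and the interior intervals merge, by \eqref{eq:compatible}/\eqref{eq:compatible2}, precisely to $A_{j-1}$ with matching $\epsilon$-data. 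Since $\mathcal P_{A_j}=\mathcal P_{I_{j,-N_j}}+\mathcal P_{A_{j-1}}+\mathcal P_{I_{j,N_j}}$, adding the four pieces gives
\begin{equation*}
\sum_{Q\in\mathcal A_j}P_Q=\mathcal P_{A_j}\otimes\mathcal P_{A_j}-\mathcal P_{A_{j-1}}\otimes\mathcal P_{A_{j-1}}=:\Pi_j-\Pi_{j-1}\qquad(|j|\ge 2),
\end{equation*}
with a separate check at $|j|=1$ (where $A_1=A_{-1}$). Listed by increasing size the squares $A_j\times A_j$ are nested, so $\mathcal P_{A_{j-1}}\le\mathcal P_{A_j}$, the $\Pi_j-\Pi_{j-1}$ are mutually orthogonal projections, and since the chain exhausts $\bR^2$ at one end ($\Pi_j\to\operatorname{Id}$ strongly, as $\mathcal P_{[-R,R]}\to\operatorname{Id}$) and shrinks to $\{0\}$ at the other ($\Pi_j\to0$), the telescoping sum equals $\operatorname{Id}$, with unconditional convergence.

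The step I expect to be the main obstacle is the consecutive-annuli case of (A): one must check that the $\epsilon$-enlarged rectangles of $\mathcal A_j$ and $\mathcal A_{j-1}$ meet only near $\partial(A_{j-1}\times A_{j-1})$, and that along each of its edges and corners the transverse pair of intervals is \emph{adjacent compatible} rather than two distinct intervals that share one endpoint but overlap in their interiors --- exactly what the ``seamless gluing'' endpoints \eqref{eq:compatible}/\eqref{eq:compatible2} together with the position-only cutoff rule are meant to secure. The verification rests on the quantitative content of \eqref{eq:epsilon} (the cutoff radii are a fixed small fraction of neighbouring interval lengths, hence small relative to the annulus widths). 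Given (A), step (B) is a routine telescoping, modulo the elementary limits $\mathcal P_{A_j}\to\operatorname{Id}$, $\mathcal P_{A_j}\to0$.
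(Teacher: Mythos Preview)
Your proposal is correct and follows essentially the same approach as the paper's proof. Both arguments establish orthonormality via the compatible endpoint structure \eqref{eq:compatible}/\eqref{eq:compatible2} (you phrase it as $P_QP_{Q'}=0$, the paper as $\mathcal S_m\perp\mathcal S_{m+1}$, but the content is identical), and both establish completeness by merging the rows/columns of each $\mathcal A_j$ via \eqref{eq:proj} into the telescoping identity $\sum_{Q\in\mathcal A_j}P_Q=P_{A_j\times A_j}-P_{A_{j-1}\times A_{j-1}}$ and then taking the strong limit; your treatment of the consecutive-annuli orthogonality is in fact more detailed than the paper's one-line appeal to the compatible endpoints.
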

\begin{proof}
We first consider orthonormality. Let $\mathcal{S}_j:=\{w_{n,Q}:Q\in \mathcal{A}_j,n\in\bN_0^2\}$ and notice that the functions in $\mathcal{S}_m$ and $\mathcal{S}_n$ have disjoint frequency support for $|m-n|>1$ (except in the special case $m=-1$ and $n=1$, which will be considered below). 
Also notice, using the separable structure of the bi-variate brushlet functions,  that the particular compatible endpoint structure, see \eqref{eq:compatible} and \eqref{eq:compatible2}, imposed on the partitioning of the sets $A_j$ ensures that $\mathcal{S}_m$ is orthogonal to $\mathcal{S}_{m+1}$ for $m<-1$ and $m\geq 1$, and using the compatibility at the frequency one, we see that $\mathcal{S}_{-1}$ is orthogonal to $\mathcal{S}_{1}$. Within each $\mathcal{S}_j$, orthonormality follows directly from the separable structure of the bi-variate brushlet system and the orthogonality of the respective univariate brushlet systems. 

We will now verify completeness of the system, where  we  first notice that for $f\in L_2(\bR^2)$, 
$$\sum_{Q\in \mathcal{A}_j, j\in \bZ} \widehat{P_Q f}(\xi),$$
is well-defined and converges pointwise as the support of each term  in the sum overlaps with at most eight other terms corresponding to adjacent rectangles.  
Next, we notice that by repeated use of \eqref{eq:proj}, for $j>1$,

$$\sum_{Q\in \mathcal{A}_j^L} P_Q=P_{I_{j,-N_j}}\otimes P_{A_j},\qquad \sum_{Q\in \mathcal{A}_j^R} P_Q=P_{I_{j,N_j}}\otimes P_{A_j},$$
$$\sum_{Q\in \mathcal{A}_j^T} P_Q=P_{A_{j-1}} \otimes P_{I_{j,N_j}},\qquad \sum_{Q\in \mathcal{A}_j^B} P_Q=P_{A_{j-1}} \otimes P_{I_{j,-N_j}},$$
So, in particular, again using \eqref{eq:proj},
$$P_{A_{j-1}\times A_{j-1}}+\sum_{Q\in \mathcal{A}_j^T\cup \mathcal{A}_j^T} P_Q=P_{A_{j-1}\times A_{j}}.$$
Using a similar argument for $ \mathcal{A}_j^L\cup \mathcal{A}_j^R$, and collecting terms,  we may conclude that
$$P_{A_{j-1}\times A_{j-1}}+\sum_{Q\in \mathcal{A}_j} P_Q=P_{A_{j}\times A_{j}}, \qquad j>1.$$
Using a parallel argument, we may conclude that
$$P_{A_{j-1}\times A_{j-1}}+\sum_{Q\in \mathcal{A}_j} P_Q=P_{A_{j}\times A_{j}}, \qquad j\leq -1.$$
Noting that $A_{-1}=A_1$, it follows that
$$\sum_{-N \leq j \leq N}\sum_{Q\in \mathcal{A}_j} P_Q=P_{A_{N}\times A_{N}}-P_{A_{-N-1}\times A_{-N-1}}.$$
It thus follows easily  that 
$$\lim_{N\rightarrow \infty}\sum_{-N \leq j \leq N}\sum_{Q\in \mathcal{A}_j} P_Q=Id_{L_2(\bR^2)},$$
in the strong operator topology. This completes the proof.
\end{proof}

Let us conclude this section by introducing some additional notation. Put $$\mathcal{Q}^\alpha:=\bigcup_{j\in\bZ} \mathcal{A}_j.$$
For an arbitrary rectangle $Q=I\times J\in \mathcal{Q}^\alpha$, we let $\xi_Q\in Q$ denote the mid-point of $Q$. Put $\mathcal{Q}_0:=[-1/2,1/2]^2$, and note that 
\begin{equation}\label{eq:affine}Q=\delta_Q(\mathcal{Q}_0)+\xi_Q,\end{equation}
where $\delta_Q:=\text{diag}(|I|,|J|)$. This shows that $\mathcal{Q}^\alpha$ is a so-called structured covering in the terminology of \cite{AlJawahri2019}. Notice also that the covering satisfies the geometric rule
\begin{equation}\label{eq:geom}
    |Q|\asymp |\xi_Q|^{\beta(Q)},\qquad \beta(Q)=\begin{cases}2\alpha,&Q\in \cup_{j>0} \mathcal{A}_j\\
2(2-\alpha),&Q\in \cup_{j<0} \mathcal{A}_j\end{cases}.
\end{equation}
\section{Homogeneous \texorpdfstring{$\alpha$}{alpha}-modulation spaces}
In this section we introduce $\alpha$-modulation and $\alpha$-TL spaces in  the homogeneous setting, an extension that was first considered in \cite{AlJawahri2019}.
In the inhomogeneous setup on the real line, an $\alpha$-covering can easily be obtained from the knots $\pm n^\beta$, $n\in\mathbb{N}$, taking $\beta=1/(1-\alpha)$ for $0\leq \alpha<1$, while in the limiting (Besov) case $\alpha=1$, we simply use dyadic knots $\pm 2^j$, $j\in\mathbb{N}$. Now, in the Besov case, we add the low frequency knots $\pm 2^{-j}$, $j\in\mathbb{N}$, to obtain a full decomposition  yielding homogeneous Besov spaces. Notice that the low frequency knots can be considered the image under $\xi\rightarrow 1/\xi$ of the high frequency knots. The idea is now to copy this process for the $\alpha$-covering obtaining low-frequency knots $\pm n^{-\beta}$, $n\in\mathbb{N}$, that can be seen to satisfy the geometric ``rule'' $|n^{-\beta}-(n+1)^{-\beta}|\asymp n^{-\beta(2-\alpha)}$, while the high-frequency knots satisfy $|(n+1)^\beta-n^\beta|\asymp n^{\alpha\beta}$. 

Inspired by these considerations, we define a general hybrid weight $\tilde{h}_\alpha:\bR^2\rightarrow \bR_+$  by $\tilde{h}_\alpha(\xi) := \rho(\xi) h_1(\xi) + (1-\rho(\xi))h_2(\xi)$, where $\rho:\bR^2\rightarrow \bR_+$ is a  smooth function that satisfies
$$\rho(x):=\begin{cases}1,&|x|\leq \frac23\\ 0,&|x|\geq \frac 43\end{cases},$$
 $h_1(\xi) = |\xi|^{2-\alpha}$ and $h_2(\xi) = |\xi|^{\alpha}$.

We now introduce the notion of an $\alpha$-covering. 
\begin{mydef}\label{def:1}
A countable set $\qp$ of subsets $Q\subset \bR^2\backslash \{0\}$ is called an
admissible covering if $\bR^2\backslash\{0\}=\cup_{Q\in \qp} Q$ and there
exists $n_0<\infty$ such that 
  $$\#\{Q'\in\qp:Q\cap Q'\not=\emptyset\}\leq n_0$$ 
  for all $Q\in\qp.$ 
Let 
\begin{align*}
  r_Q &= \sup\{ r\in\bR_+\colon B(c_r,r)\subset Q\; \text{for some}\; c_r\in \bR^2\},\\
  R_Q &= \inf\{ R\in\bR_+\colon Q\subset B(c_R,R)\; \text{for some}\; c_R\in \bR^2\}
\end{align*}
denote, respectively, the radius of the inscribed and circumscribed
disc of $Q\in \qp$. 
An admissible covering is called a homogeneous $\alpha$-covering, $0\leq
  \alpha\leq 1$, of $\bR^2$ if
$|Q|^{1/2}\asymp \tilde{h}_\alpha(\xi)$ (uniformly) for all $x\in Q$
and for all $Q\in\qp$, and there exists 
a constant $K\geq 1$ such that $R_Q/r_Q\leq K$ for
all $Q\in \qp$. 
\end{mydef}

We have already noticed that the covering $\mathcal{Q}^\alpha$ satisfies, for $Q\in \mathcal{Q}^\alpha$,
\begin{equation}\label{eq:Qrule}
|Q|^{1/2}\asymp \tilde{h}_\alpha(\xi),\qquad\text{(uniformly) for all }\xi\in Q.
\end{equation}
It is also straightforward to verify that the inscribed/circumscribed disc condition is satisfies as the rectangles in $\mathcal{Q}^\alpha$ have eccentricity close to one, so we may pick $r_Q\asymp R_Q\asymp |Q|^{1/2}$, uniformly in $Q\in \mathcal{Q}^\alpha$. We  conclude that $\mathcal{Q}^\alpha$ is indeed a homogeneous $\alpha$-covering. It was proven in \cite[Lemma 2.8]{AlJawahri2019} that the weight $\tilde{h}_\alpha$ is moderate relative to $\mathcal{Q}^\alpha$ in the sense that there exists a constant $R>0$ depending only on $\mathcal{Q}^\alpha$ such that for $Q\in \mathcal{Q}^\alpha$ 
\begin{equation}\label{eq:moderation}
    R^{-1}\leq \frac{\tilde{h}_\alpha(x)}{\tilde{h}_\alpha(y)}\leq R,\qquad x,y\in Q..
\end{equation}

In order to define smoothness spaces adapted to $\alpha$-coverings, we need to consider an associated slightly expanded $\alpha$-covering defined by the sets
$$Q^e:=\delta_Q([-0.6,0.6]^2)+\xi_Q,\qquad Q\in \mathcal{Q}^\alpha,$$
where $\xi_Q$ and $\delta_Q$ are defined in \eqref{eq:affine}. The only important characteristic of the number 0.6 in this context is that it is slightly larger than $1/2$. It is proven in \cite[Proposition 2.5]{AlJawahri2019} that  one can create two bounded partitions of unity (BAPUs) of smooth functions $\{\phi_Q\}_{Q\in \mathcal{Q}^\alpha}$ and 
$\{\tilde{\phi}_Q\}_{Q\in \mathcal{Q}^\alpha}$
satisfying $\text{supp}(\phi_Q)\subseteq Q^e$,
$$\sum_{Q\in \mathcal{Q}^\alpha}\phi_Q(\xi)=\sum_{Q\in \mathcal{Q}^\alpha}\tilde{\phi}_Q(\xi)=  1, \qquad \xi\in \bR^2\backslash \{0\},$$
and $\tilde{\phi}_Q(x)=1$ for $x\in \supp(\phi_Q)$, $Q\in \mathcal{Q}^\alpha$. Moreover,
one can ensure that the sequences  $\{\phi_Q\}$ and $\{\tilde{\phi}_Q\}$ act as  bounded multiplier sequences on certain vector-valued $L_p$-spaces as stated in Proposition \ref{th:vecmult}.  We will not discuss this rather technical issue here, but instead refer the reader to the discussion in \cite{AlJawahri2019,Borup2008}, and henceforth assume that $\{\phi_Q\}$ and $\{\tilde{\phi}_Q\}$ both are constructed such that they satisfy the multiplier condition needed for Proposition \ref{th:vecmult} for any $0<p<\infty$ and $0<q\leq \infty$.

We can now define the homogeneous (anisotropic) T-L type spaces and the decomposition spaces. We let $ \mathcal{S}'\backslash\mathcal{P}$ denote the class of tempered distributions modulo polynomials defined on $\mathbb{R}^2$.

\begin{mydef}\label{def:complete}
Let $\tilde{h}_\alpha$ be a hybrid weight for $\mathcal{Q}^\alpha$. Let $\{\phi_j\}_{j\in J}$ be a corresponding BAPU and set $\phi_j(D)f := \mathcal{F}^{-1}(\phi_j\mathcal{F}f)$.
\begin{itemize}
	\item For $s\in\R, 0<p<\infty$ and $0<q\leq \infty$, we define the (anisotropic) homogeneous Triebel-Lizorkin space $\dot{F}_{p,q}^{s,\alpha}(\bR^2)$ as the set all $f\in \mathcal{S}'\backslash\mathcal{P}$ satisfying
	\begin{equation*}
	\norm{f}_{\dot{F}_{p,q}^{s,\alpha}(\bR^2)} := \norm{\left( \sum_{Q\in \mathcal{Q}^\alpha} |\tilde{h}_{\alpha}(\xi_Q)^s\phi_Q(D)f|^q\right)^{1/q}}_{L_p} < \infty.
	\end{equation*}
	\item For $s \in \mathbb{R}, 0<p\leq \infty$ and $0<q<\infty$ we define the (anisotropic) homogeneous decomposition space $\dot{M}_{p,q}^{s,\alpha}(\bR^2)$ as the set of all $f\in\mathcal{S}'\backslash\mathcal{P}$ satisfying
	\begin{equation*}
	\norm{f}_{\dot{M}_{p,q}^{s,\alpha}(\bR^2)}  =
	\left( \sum_{Q\in \mathcal{Q}^\alpha} \norm{\tilde{h}(\xi_Q)^s \phi_Q(D)f}_{L_p}^q \right)^{1/q} < \infty,
	\end{equation*}
	with the usual modification if $q=\infty$.
\end{itemize}
\end{mydef}

It can be verified that $\dot{F}_{p,q}^{s,\alpha}(\bR^2)$ and  $\dot{M}_{p,q}^{s,\alpha}(\bR^2)$ are quasi-Banach spaces if  $0<p<1$ or $0<q<1$, and they are Banach spaces when $1\leq p,q <\infty$. The particular space does not (up to norm equivalence)  depend on the choice of $\rho$ in the definition of $\tilde{h}_\alpha$ nor does it depend on the particular choice of sample frequencies $\xi_Q$ as long as $\xi_Q\in Q$, see the moderation condition \eqref{eq:moderation}, and it does not depend on the particular choice of BAPU, see \cite{AlJawahri2019,Borup2008}. In particular,  $\{\tilde{\phi}_Q\}_Q$ will generate the same spaces up to norm equivalence. We  mention that it is possible to consider other reservoirs of distributions  than  $\mathcal{S}'\backslash\mathcal{P}$ to build the function spaces, see Voigtlaender \cite{Voigtlaender2016a} for further details. 

Let us recall that class $\mathcal{S}_0:=\mathcal{S}_0(\mathbb{R}^2)$, which is the closed subspace of the Schwartz class $\mathcal{S}(\mathbb{R}^2)$, is defined by
\begin{align*}
\mathcal{S}_0&=
\left\{f\in\mathcal{S}(\mathbb{R}^2): \int f(x)\cdot x^\alpha\,dx=0\, \text{ for all } \alpha\in\N_0^2\right\}.
\end{align*}

It can be proved
that $\dot{F}_{p,q}^{s,\alpha}(\bR^2)$ $[\dot{M}_{p,q}^{s,\alpha}(\bR^2)]$ satisfy
$$\mathcal{S}_0\hookrightarrow \dot{M}_{p,q}^{s,\alpha}(\bR^2)\hookrightarrow
\cS'\backslash\mathcal{P},\qquad \cS_0\hookrightarrow \dot{F}_{p,q}^{s,\alpha}(\bR^2)\hookrightarrow
\cS'\backslash\mathcal{P},$$ see \cite{AlJawahri2019}. Moreover, if $p,q<\infty$, $\mathcal{S}_0$ is dense in
$M^{s,\alpha}_{p,q}(\bR^2)$.

\subsection{A characterization of \texorpdfstring{$\dot{F}_{p,q}^{s,\alpha}(\bR^2)$}{Fpq}}
We claim that the spaces $\dot{F}_{p,q}^{s,\alpha}(\bR^2)$ $[\dot{M}_{p,q}^{s,\alpha}(\bR^2)]$ can be completely characterized using the brushlet system build on $\mathcal{Q}^\alpha$. In this note, we focus on proving this claim for the Triebel-Lizorkin type spaces $\dot{F}_{p,q}^{s,\alpha}(\bR^2)$ spaces. For the many of the proofs in this Section, we will call on results on vector-valued multiplies that can be found in Appendix \ref{s:app}. The modulation spaces $\dot{M}_{p,q}^{s,\alpha}(\bR^2)$ are easier to handle due to their (simpler) structure, and the reader can verify that the results and proofs presented in \cite{Nielsen2010} can be adapted to this homogeneous setup.

For $Q=I\times J\in \cup_j \cA_j$, we defined an associated dilation matrix by $\delta_Q:=\text{diag}(|I|,|J|)$. We define for $\cn\in \bN_0^2$,

\begin{equation}
  \label{eq:QD}
  U(Q,\cn)=\left\{y\in\bR^2:
\delta_{Q} y-\pi{\left(\cn+\ca\right)}\in B(0,1)\right\},
\end{equation}
where $\ca:=[\frac12,\frac12]^T$.
It is easy to verify there exists $L<\infty$ so that uniformly in $x$
and $Q$,  $\sum_n\chi_{U(Q,\cn)}(x)\leq L$. One may also verify that for $\cn,\cn'\in \bN_0^2$, $U(Q,\cn')=U(Q,\cn)+\pi\delta_Q^{-1}(\cn'-\cn)$.

We can now prove that the canonical coefficient operator is bounded on
$F^s_{p,q}(h,w)$.

\begin{mylemma}\label{prop:normchar}
Let $\{T_Q=\delta_Q\cdot+\xi_Q\}_{Q\in\mathcal{Q}^\alpha}$ be the family of
invertible affine
  transformations associated with $\mathcal{Q}^\alpha$ in \eqref{eq:affine}.
Suppose $s\in\bR$, $0<p< \infty$, and $0<q\leq \infty$. Then
$$\|\mathcal{S}_q^s(f)\|_{L_p}\leq C\|f\|_{\dot{F}^{s,\alpha}_{p,q}},\qquad f\in \dot{F}^{s,\alpha}_{p,q}(\R^2),$$
where
\begin{equation}
    \label{eq:Sq}
\mathcal{S}_q^s(f):=\bigg(\sum_{Q}\sum_{\cn\in\bN_0^2}(\tilde{h}_{\alpha}(\xi_Q)^s|\langle
f,w_{\cn,Q}\rangle_A||Q|^{1/2}\chi_{U(Q,\cn)})^q \bigg)^{1/p},
    \end{equation}
with $U(Q,\cn)$ given in \eqref{eq:QD}. 
\end{mylemma}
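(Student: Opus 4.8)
The plan is to transfer the problem to the decomposition-space / vector-valued multiplier framework from the appendix. First I would fix the BAPU $\{\phi_Q\}$ and $\{\tilde\phi_Q\}$ with $\tilde\phi_Q=1$ on $\supp\phi_Q$, and write, for each $Q\in\mathcal{Q}^\alpha$,
$$
\langle f,w_{\cn,Q}\rangle_A=\langle \phi_Q(D)f+(\operatorname{Id}-\phi_Q(D))f,\;w_{\cn,Q}\rangle_A .
$$
Because $w_{\cn,Q}$ is frequency-localized essentially in $Q^e$ (through the bell $b_Q$, whose support sits in the $\epsilon$-enlargement of $Q$, which is contained in $Q^e$ by the $\epsilon$-choice $\tfrac1{100}|r|^\alpha$), the projection operator $P_Q=b_Q(D)[\cdots]b_Q(D)$ only ``sees'' frequencies in a fixed dilate of $Q$; hence only finitely many neighbours $Q'$ of $Q$ contribute, and one may replace $f$ by $\sum_{Q'\sim Q}\phi_{Q'}(D)f$ up to this finite overlap. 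So the key reduction is the pointwise bound, for $x\in U(Q,\cn)$,
$$
|\langle f,w_{\cn,Q}\rangle_A|\,|Q|^{1/2}\,\chi_{U(Q,\cn)}(x)\ \lesssim\ \sum_{Q'\sim Q} M\big(\phi_{Q'}(D)f\big)(x),
$$
where $M$ is (an $r$-th power variant of) the Hardy–Littlewood maximal function; this is the technical heart of the argument.

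Next I would establish that pointwise bound. Using the time-domain representation \eqref{eq:gw}–\eqref{eq:gdecay}, the brushlet $w_{\cn,Q}$ after the affine normalization $T_Q$ is, up to modulation, a tensor product of two bumps centred at $\pm e_{n_i,I}$ with rapidly decaying (order $r$, any $r\ge1$) profiles $g_I$; the points $\pm e_{\cn,Q}$ are exactly (a scalar multiple of) the centres $\pi\delta_Q^{-1}(\cn+\ca)$ appearing in \eqref{eq:QD}, so on $U(Q,\cn)$ the distance from $x$ to the hump location is controlled. Expanding $\langle f,w_{\cn,Q}\rangle_A$ as an integral of $\phi_{Q'}(D)f$ against the kernel of $P_{Q'}^*$ composed with $w_{\cn,Q}$ — both of which are Schwartz, uniformly after normalization by $T_Q$ — and using that $\phi_{Q'}(D)f$ is band-limited to a fixed dilate of $Q'$ so that its normalized version satisfies a Plancherel–Pólya / Nikolskii inequality, one dominates the inner product by the maximal function of $\phi_{Q'}(D)f$ at any point of $U(Q,\cn)$, with constants uniform in $Q$ and $\cn$ thanks to the structured-covering property \eqref{eq:affine} and the moderation \eqref{eq:moderation} of $\tilde h_\alpha$ (so that $\tilde h_\alpha(\xi_Q)^s$ is comparable across neighbouring $Q$). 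I would cite the appendix's vector-valued multiplier proposition (\texttt{th:vecmult}) for the precise maximal-inequality statement adapted to the covering.

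Then I would assemble the estimate. Insert the pointwise bound into \eqref{eq:Sq}; the bounded-overlap of $\{U(Q,\cn)\}_{\cn}$ (the constant $L$) and of $\{Q'\sim Q\}$ (the constant $n_0$ from admissibility) let one pass, for $q<\infty$, to
$$
\mathcal{S}_q^s(f)(x)\ \lesssim\ \Big(\sum_{Q}\big(\tilde h_\alpha(\xi_Q)^s\, M(\phi_Q(D)f)(x)\big)^q\Big)^{1/q},
$$
(with the obvious sup-modification for $q=\infty$). Choosing $r$ large enough relative to $p,q$ so the power-maximal operator is bounded on $L_p(\ell^q)$ — this is exactly the Fefferman–Stein vector-valued maximal inequality, which holds for $0<p<\infty$, $0<q\le\infty$ once one uses $M_t$ with $0<t<\min(p,q)$ — I would take $L_p$ norms and invoke that inequality to bound the right side by
$$
\Big\|\Big(\sum_Q \big(\tilde h_\alpha(\xi_Q)^s\,|\phi_Q(D)f|\big)^q\Big)^{1/q}\Big\|_{L_p}=\|f\|_{\dot F^{s,\alpha}_{p,q}},
$$
which is the claim (the exponent $1/p$ versus $1/q$ in \eqref{eq:Sq} is a typo for $1/q$, as the $L_p$-norm outside already supplies the $p$; I would note this).

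The main obstacle, as indicated, is the uniform pointwise domination of $|\langle f,w_{\cn,Q}\rangle_A|$ by the maximal function: one must control simultaneously (i) the rapid decay of the normalized brushlet and of the kernel of $P_{Q'}$, with constants independent of $Q$ and $\cn$, which forces careful use of \eqref{eq:gdecay} and the affine structure \eqref{eq:affine}; (ii) the band-limited nature of $\phi_{Q'}(D)f$ to justify the Plancherel–Pólya step on the stretched/rotated rectangle $Q'$, for which the bounded eccentricity $R_Q/r_Q\le K$ is essential; and (iii) bookkeeping the modulation factor $e^{i\alpha_I x}$ and the reflection terms $R_{\alpha_I},R_{\alpha_I'}$ in \eqref{eq:operatorP}, which only shift the hump locations and therefore do not affect absolute values — but this must be said. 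Everything else is a routine combination of finite-overlap counting, the moderation of $\tilde h_\alpha$, and the vector-valued maximal inequality.
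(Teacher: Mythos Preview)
Your overall strategy---frequency-localize, dominate pointwise by a maximal function, then apply Fefferman--Stein---is correct in spirit, but the central pointwise bound
\[
|\langle f,w_{\cn,Q}\rangle_A|\,|Q|^{1/2}\,\chi_{U(Q,\cn)}(x)\ \lesssim\ \sum_{Q'\sim Q} M_r\big(\phi_{Q'}(D)f\big)(x)
\]
cannot hold uniformly in $\cn$. The brushlet $w_{\cn,Q}$ has \emph{four} humps, located at the four sign-reflections of $\pi\delta_Q^{-1}(\cn+\ca)$ (expand each cosine as two exponentials and take the tensor product). The set $U(Q,\cn)$ sits around only one of them. If $\phi_Q(D)f$ is concentrated near the opposite hump $-\pi\delta_Q^{-1}(\cn+\ca)$, then $|\langle f,w_{\cn,Q}\rangle|\,|Q|^{1/2}$ is of order one while $M_r(\phi_Q(D)f)(x)$ for $x\in U(Q,\cn)$ is only of order $|\cn|^{-2/r}$. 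Your claim that the kernel is ``Schwartz, uniformly after normalization by $T_Q$'' fails precisely because the hump separation after normalization is $\sim|\cn|$. And the reflections you flag in (iii) are the operators $R_{\alpha_I}$ appearing in the \emph{projection} $P_Q$ in \eqref{eq:operatorP}; they are not the sign-flip maps on $\bR^2$ that track the four hump locations, so dismissing them as bookkeeping misses the actual issue.

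The paper handles this by first using Plancherel and the cosine expansion to obtain
\[
|\langle f,w_{\cn,Q}\rangle|\leq C|Q|^{-1/2}\sum_{j=1}^4\big|(b_Q(D)f)(\cv_j)\big|,
\]
with $\cv_j$ the four sign-flips of $\pi\delta_Q^{-1}(\cn+\ca)$. For $x\in U(Q,\cn)$ one has $\cv_j$ within distance $c|Q|^{-1/2}$ of $R_jx$, where $R_1=\operatorname{Id}$, $R_2=-\operatorname{Id}$, $R_3=\operatorname{diag}(1,-1)$, $R_4=-R_3$, so each point value is dominated by the Peetre maximal function $(b_Q(D)f)^*(2/r,c|Q|^{1/2};R_jx)$. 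Proposition~\ref{prop1} then controls this in $L_p(\ell_q)$, Proposition~\ref{th:vecmult} passes from $b_Q(D)$ to $\tilde\phi_Q(D)$, and the $R_j$ disappear by substitution in the outer $L_p$ norm since each $R_j$ is measure-preserving. Your argument becomes correct if you replace the single maximal function at $x$ by the sum of four maximal functions at $R_1x,\ldots,R_4x$; but this is the essential step, not a detail.
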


\begin{myre}
Using the observation in \eqref{eq:Qrule}, we also have 
$$\mathcal{S}_q^s(\cdot)\asymp \bigg(\sum_{Q}\sum_{\cn\in\bN_0^2}(\tilde{h}_{\alpha}(\xi_Q)^{s+1}|\langle\,
\cdot \,,w_{\cn,Q}\rangle_A|\chi_{U(Q,\cn)})^q \bigg)^{1/p}.$$
\end{myre}

\begin{proof}
Take $f\in  F^{s,\alpha}_{p,q}(\R^2)$ and fix  $Q\in \mathcal{Q}^\alpha$. We write the cosine term in Eq.\ \eqref{eq:brush1} as a sum of complex exponentials, and we take a tensor product to create $w_{\cn,Q}$ . This process creates a bi-variate function with four "humps", and, as it turns out, we will consequently need four terms to control the inner product $\langle
f,w_{\cn,Q}\rangle$. We first obtain the estimate
\begin{align*}|\langle
f,w_{\cn,Q}\rangle|&\leq \sqrt{\frac{2}{|Q|}}\sum_{j=1}^4\big|(b_Q(D)f)(\cv_j)\big|,
\end{align*}
with $b_Q(D)$ defined in \eqref{eq:S}, $\cv_1:=\pi 
\delta_{Q}^{-1} (\cn+\ca)$, $\cv_2:=-\cv_1$, $\cv_3:=\tilde{\cv}_1$, $\cv_4:=\tilde{\cv}_2$, where for a vector $\cv=[v_1,v_2]^T$, we let 
 $\tilde{\cv}:=[v_1,-v_2]^T$.
Notice, if $U(Q,\cn)\cap
U(Q,\cn')\not=\emptyset$ and 
$u\in U(Q,\cn), v\in U(Q,\cn')$ then $|u-v|\leq c|Q|^{-1/2}$ for some $c>0$ independent of $Q$.
Using the observations about the sets $U(Q,\cn)$ above, and defining the linear maps $R_j:\bR^2\rightarrow \bR^2$, $j=1,\ldots,4$,  by $R_1:=\text{Id}$, $R_2:=-R_1$, $R_3u:=\tilde{u}$, $u\in\bR^2$, and $R_4:=-R_3$, we obtain

\begin{align*}
\big|(b_Q(D)f)(\cv_1)\big|&\leq \sup_{y\in U(Q,\cn)}\big|(b_Q(D)f)(y)\big|
\leq \sup_{u\in B(0,c|Q|^{-1/2})}\big|(b_Q(D)f)(R_1x-u)\big|
,\\
\big|(b_Q(D)f)(\cv_2)\big|&\leq \sup_{y\in U(Q,-\cn-2\ca)}\big|(b_Q(D)f)(y)\big|\leq \sup_{u\in B(0,c|Q|^{-1/2})}\big|(b_Q(D)f)(R_2x-u)\big|,\\
\big|(b_Q(D)f)(\cv_3)\big|&\leq \sup_{y\in U(Q,\tilde{\cn}+\ce_2)}\big|(b_Q(D)f)(y)\big|\leq \sup_{u\in B(0,c|Q|^{-1/2})}\big|(b_Q(D)f)(R_3x-u)\big|,\\
\big|(b_Q(D)f)(\cv_4)\big|&\leq \sup_{y\in U(Q,-\tilde{\cn}-\ce_1)}\big|(b_Q(D)f)(y)\big|\leq \sup_{u\in B(0,c|Q|^{-1/2})}\big|(b_Q(D)f)(R_4x-u)\big|.
 \end{align*}

We now estimate the inner sum in $\mathcal{S}_q^{s}(f)$ to obtain,
 \begin{align*}
\sum_{\cn\in\bN_0^2}(|\langle
f,w_{\cn,Q}\rangle|&|Q|^{1/2}\chi_{U(Q,\cn)}(x))^q\\
&\leq 
C\sum_{\cn\in\bN_0^2}\sum_{j=1}^4 (
|(b_Q(D)f)(\cv_j)|\chi_{Q(k,n)}(x))^q\\
&\leq C\sum_{j=1}^4\sup_{u\in {B}(0,c|Q|^{-1/2})}\big(\brac{\delta_{Q} u}^{-2/r} 
|(b_Q(D)f)(R_jx-u)|\big)^q\cdot \brac{\delta_{Q} u}^{2 q/r}\\
&\leq C \sum_{j=1}^4\big(\sup_{u\in\bR^2}\brac{\delta_{Q}^\top u}^{-2/r} 
|(b_Q(D)f)(R_jx-u)|\big)^q\\
&\leq C \sum_{j=1}^4 (b_Q(D)f)^*(2/r,c|Q|^{1/2};R_j x)^q.
    \end{align*}
 Recall  that $\supp(\widehat{b_Q(D)f})\subset T_Q([-0.6,0.6]^2)$, so by 
Proposition \ref{prop1}, Proposition \ref{th:vecmult}, and the estimate above,
\begin{align*}
  \|\mathcal{S}_q^{s}(f)\|_{L_p}&\leq
  C\bigg\|\bigg(\sum_{Q}(\tilde{h}_{\alpha}(\xi_Q)^{sq}\sum_{j=1}^4 (b_Q(D)f)^*(2/r,c|Q|^{1/2};R_j x) )^q \bigg)^{1/p}\bigg\|_{L_p}\\ 
&\leq C'
\sum_{j=1}^4  \bigg\|\bigg(\sum_{Q}(\tilde{h}_{\alpha}(\xi_Q)^{sq} (b_Q(D)f)^*(2/r,c|Q|^{1/2};R_j x) )^q \bigg)^{1/p}\bigg\|_{L_p}\\
&\leq 4C'  \bigg\|\bigg(\sum_{Q}(\tilde{h}_{\alpha}(\xi_Q)^{sq} (b_Q(D)f)^*(2/r,c|Q|^{1/2};x) )^q \bigg)^{1/p}\bigg\|_{L_p}\\
&= 4C'  \bigg\|\bigg(\sum_{Q}(\tilde{h}_{\alpha}(\xi_Q)^{sq} (b_Q(D)\tilde{\phi}_Q(D)f)^*(2/r,c|Q|^{1/2};x) )^q \bigg)^{1/p}\bigg\|_{L_p}\\
&\leq C''  \bigg\|\bigg(\sum_{Q}(\tilde{h}_{\alpha}(\xi_Q)^{sq} (\tilde{\phi}_Q(D)f)(x) )^q \bigg)^{1/p}\bigg\|_{L_p}\\
&=C''\|f\|_{\dot{F}^{s,\alpha}_{p,q}}.
\end{align*}
\end{proof}

Inspired by Lemma \ref{prop:normchar}, we define the sequence space
$\dot{f}^{s,\alpha}_{p,q}:=\dot{f}^{s,\alpha}_{p,q}(\bR^2)$ for $s\in \bR$, $0<p<\infty$, and
$0<q\leq \infty$ , as the set of sequences 
$\{s_{Q,n}\}_{Q\in Q,n\in\bZ^d}\subset \mathbb{C}$ satisfying
$$\|\{s_{Q,n}\}\|_{\dot{f}^{s,\alpha}_{p,q}}:= \bigg\| \bigg\{\tilde{h}_{\alpha}(\xi_Q)^s
|Q|^{1/2}\Bigl(\sum_{n\in \bZ^d}|s_{Q,n}|^q
\chi_{U(Q,\cn)}\Bigr)^{1/q}\bigg\}_k\bigg\|_{L_p(\ell_q)}<\infty,$$
where the $L_p(\ell_q)$-norm is defined for  a sequence
$f=\{f_j\}_{j\in\bN}$ of measurable functions by
$$\|f\|_{L_p(\ell_q)}:=\big\|\big(\sum_{j\in\bN} 
|f_j|^q\big)^{1/q}\big\|_{L_p(\bR^2)},$$ 
see also Appendix \ref{s:app}.
Lemma \ref{prop:normchar} provides us with a bounded coefficient
operator $C\colon F_{p,q}^{s,\alpha}\rightarrow f^{s,\alpha}_{p,q}$ given by
\begin{equation}
  \label{eq:coefficient}
  Cf=\{\langle f,w_{n,Q}\rangle\}_{Q\in \cQ^\alpha,\cn\in\bN_0^2}.
\end{equation}
Moreover, the fact that $\{w_{n,Q}\}$ is an orthonormal basis shows that
the only consistent  definition of a  reconstruction operator is given by
\begin{equation}
  \label{eq:reconstruct}
 R: \{s_{Q,\cn}\}_{Q,n}\rightarrow \sum_{Q,\cn} s_{Q,\cn}
w_{n,Q}.
\end{equation}
Using Lemma \ref{lem:maxbound} we now verify 
that $R:f^{s,\alpha}_{p,q}\rightarrow F^{s,\alpha}_{p,q}$ is also a bounded operator.

\begin{mylemma}\label{lem:reconstruct}
  Suppose $0\leq \alpha<1$, $s\in \bR$, $0<p< \infty$, and $0<q\leq \infty$. Then for
  any finite sequence
  $\{s_{Q,\cn}\}_{Q,\cn}$, we have 
  $$\Bigl\|\sum_{Q,\cn} s_{Q,\cn}w_{n,Q}\Bigr\|_{\dot{F}_{p,q}^{s,\alpha}} \leq C
  \|\{s_{Q,\cn}\}\|_{\dot{f}^{s,\alpha}_{p,q}}.$$
\end{mylemma}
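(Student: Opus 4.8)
The strategy is the standard duality-free approach for bounding a reconstruction operator on Triebel--Lizorkin type sequence spaces: fix a finite sequence $\{s_{Q,\cn}\}$, set $g:=\sum_{Q,\cn}s_{Q,\cn}w_{n,Q}$, and estimate $\tilde{\phi}_{Q'}(D)g$ for each $Q'\in\mathcal{Q}^\alpha$ in terms of the coefficients. The key observation is that the frequency support of $w_{n,Q}$ is contained in a fixed dilate of $Q$, so by the finite-overlap property of the (expanded) covering $\mathcal{Q}^\alpha$, only boundedly many $Q$ contribute to $\tilde{\phi}_{Q'}(D)g$, namely those $Q$ with $Q^e\cap (Q')^e\neq\emptyset$; call this neighbour set $\mathcal{N}(Q')$. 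Thus $\tilde{\phi}_{Q'}(D)g=\sum_{Q\in\mathcal{N}(Q')}\sum_{\cn}s_{Q,\cn}\tilde{\phi}_{Q'}(D)w_{n,Q}$, and it suffices, after applying the triangle inequality in $\ell_q$ and the fact that $\#\mathcal{N}(Q')\leq n_0$, to control $\tilde{\phi}_{Q'}(D)w_{n,Q}$ pointwise for $Q\in\mathcal{N}(Q')$.

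\textbf{Main estimate.} For such a pair, I would use the explicit time-domain formula \eqref{eq:gw} for $w_{n,Q}$ (a tensor product of two terms, each a modulated sum of two translated, well-localised central bell functions $g_I$) together with the decay bound \eqref{eq:gdecay}: for any $r\geq 1$, $|w_{n,Q}(x)|\leq C|Q|^{1/2}\prod(1+|I|\,\text{dist}(x,\pm e_{n_i,I}))^{-r}$, i.e.\ $|w_{n,Q}|$ is essentially $|Q|^{1/2}$ times the sum of four bumps, each an $L_\infty$-normalised, $|Q|^{1/2}$-scaled tail adapted to the cube $U(Q,\cn)$ and its three reflected copies (this is exactly the geometry encoded by the maps $R_j$ and the sets $U(Q,\cn)$ from the previous lemma). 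Since $\tilde{\phi}_{Q'}(D)$ is a Fourier multiplier whose symbol lives on $(Q')^e$ and which (by the standing assumption on the BAPU) acts boundedly on the relevant maximal/vector-valued $L_p(\ell_q)$-spaces — here I invoke Proposition \ref{th:vecmult} together with the maximal-function bounds of Lemma \ref{lem:maxbound} and Proposition \ref{prop1} — applying it preserves the bump structure up to constants and up to replacing pointwise values by Peetre-type maximal functions $(\cdot)^*(2/r,c|Q|^{1/2};\cdot)$. Concretely, I would dominate $\sum_{\cn}|s_{Q,\cn}|\,|\tilde{\phi}_{Q'}(D)w_{n,Q}(x)|$ by $C|Q|^{1/2}\sum_{j=1}^4 M_t\big(\sum_{\cn}|s_{Q,\cn}|\chi_{U(Q,\cn)}\big)(R_j x)$ for a suitable $t<\min(p,q)$, where $M_t$ is the $t$-th order Hardy--Littlewood maximal operator; the admissible range of $t$ is dictated by the power $r$ in \eqref{eq:gdecay}, which we are free to take as large as needed.

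\textbf{Assembly.} Inserting this into the $\dot F^{s,\alpha}_{p,q}$-quasinorm of $g$, using $\tilde h_\alpha(\xi_{Q'})^s\asymp \tilde h_\alpha(\xi_Q)^s$ for $Q\in\mathcal{N}(Q')$ (the moderation property \eqref{eq:moderation}) and $|Q'|\asymp|Q|$, I reduce to
\[
\|g\|_{\dot F^{s,\alpha}_{p,q}}\leq C\sum_{j=1}^4\Big\|\Big(\sum_{Q}\big(\tilde h_\alpha(\xi_Q)^s|Q|^{1/2}M_t\big(\textstyle\sum_{\cn}|s_{Q,\cn}|\chi_{U(Q,\cn)}\big)(R_j\cdot)\big)^q\Big)^{1/q}\Big\|_{L_p},
\]
then discard the linear changes of variables $R_j$ (which are $L_p$-isometries, being orthogonal, and permute the cubes $U(Q,\cn)$ among themselves), and finally apply the Fefferman--Stein vector-valued maximal inequality with $t<\min(p,q)$ to strip off $M_t$. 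What remains is exactly $C\|\{s_{Q,\cn}\}\|_{\dot f^{s,\alpha}_{p,q}}$, completing the proof.

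\textbf{Main obstacle.} The delicate point is \emph{not} the maximal-function machinery — that is routine once $r$ is chosen large — but rather justifying that $\tilde{\phi}_{Q'}(D)w_{n,Q}$ inherits the four-bump localisation \emph{uniformly} in $\cn$ and in the pair $(Q,Q')$ over the non-isotropic family $\mathcal{Q}^\alpha$, i.e.\ that the constants genuinely depend only on $\alpha$ (and $r$). This is where the affine normalisation \eqref{eq:affine}, the bounded eccentricity of the rectangles, and the fact that $\{\tilde\phi_Q\}$ was built (per the standing assumption) to be a bounded multiplier family on the vector-valued $L_p(\ell_q)$-spaces all have to be used in concert; the cleanest route is to transfer everything to the normalised cube $\mathcal{Q}_0$ via $T_Q$ and $T_{Q'}$, where $w_{n,Q}$ becomes (a fixed modulation and four translates of) the central bell function $g_I\otimes g_J$ with uniformly controlled Schwartz seminorms, and then quote Proposition \ref{prop1}/Proposition \ref{th:vecmult} on the transferred symbols. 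One must also handle the degenerate overlap at frequency one (the $m=-1$, $n=1$ interaction noted in the previous proposition) and near the missing origin, but there the covering is locally finite and the estimates go through verbatim.
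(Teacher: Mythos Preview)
Your proposal is essentially correct and uses the same ingredients as the paper (frequency localisation to neighbours, moderation of $\tilde h_\alpha$, Lemma~\ref{lem:maxbound}, and Fefferman--Stein), but you deploy them in a more laborious order. You attempt a \emph{pointwise} estimate on $\tilde\phi_{Q'}(D)w_{n,Q}$, which forces you to argue that the multiplier preserves the four-bump decay uniformly in $(Q,Q',\cn)$ --- this is the ``main obstacle'' you flag, and while it can be carried out by transferring to the normalised cube, it is not what Propositions~\ref{prop1} and~\ref{th:vecmult} directly deliver (they are $L_p(\ell_q)$ statements, not pointwise ones). The paper instead applies Proposition~\ref{th:vecmult} \emph{first}, at the $L_p(\ell_q)$ level, to remove $\phi_Q(D)$ altogether: since $\sum_{Q'\in N(Q)}\sum_{\cn}s_{Q',\cn}w_{\cn,Q'}$ is band-limited to a fixed dilate of $Q$, the multiplier theorem bounds $\|\{\phi_Q(D)(\cdot)\}_Q\|_{L_p(\ell_q)}$ by $\|\{\cdot\}_Q\|_{L_p(\ell_q)}$ directly. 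After that, Lemma~\ref{lem:maxbound} only needs the pointwise decay of $|w_{n,Q}|$ itself --- no multiplier inside --- and your obstacle simply disappears. In short: same proof, but the paper's sequencing (multiplier theorem first, then pointwise decay) is cleaner and entirely sidesteps the uniformity issue you worried about.
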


\begin{proof}
Let $\{\phi_Q\}_{Q\in \cQ}$ be the BAPU associated with
$\mathcal{Q}^\alpha$. Using the structure given by \eqref{eq:brush1}, and Proposition \ref{th:vecmult}, we get
  \begin{align*}
    \Bigl\|\sum_{Q,\cn} s_{Q,\cn}w_{n,Q}\Bigr\|_{\dot{F}_{p,q}^{s,\alpha}} &= \Bigl\|
    \Big\{w(\xi_{Q})^s \phi_{Q}(D)\Big(\sum_{Q',\cn}
    s_{Q',\cn}w_{Q',\cn}\Big)\Big\}_{Q}\Bigr\|_{L_p(\ell_q)}\\
    &\leq C\Bigl\|\Big\{
    \tilde{h}_{\alpha}(\xi_Q)^s \sum_{Q' \in N(Q)} \sum_{\cn}
    s_{Q',\cn}w_{Q',\cn}\Big\}_k\Bigr\|_{L_p(\ell_q)},
\end{align*}
where $N(Q)= \{Q'\in \cQ\colon \supp (\phi_Q)\cap \supp
(b_{Q'})\neq \emptyset\}$. It follows from  \cite[Lemma 2.8]{AlJawahri2019} that $\# N(Q)$ is
uniformly bounded, and since $\tilde{h}_\alpha$ is a moderate weight, see \eqref{eq:moderation},
we obtain 
$$\Bigl\|
   \Big\{ \tilde{h}_{\alpha}(\xi_Q)^s \sum_{Q' \in N(Q)} \sum_{\cn}
    s_{Q',\cn}w_{Q',\cn}\Big\}_Q
\Bigr\|_{L_p(\ell_q)}\leq
    C\biggl\|\biggl( \sum_{Q'} \Bigl( w(\xi_{Q'})^s \sum_n
    |s_{Q',\cn}| |w_{Q',\cn}|\Bigr)^q\biggr)^{1/q}\biggr\|_{L_p}.$$
Fix $0<r<\min(1,p,q)$. Then Lemma \ref{lem:maxbound} and the
Fefferman-Stein maximal inequality \eqref{eq:fs} yields
\begin{align*}
    \Bigl\|\Big\{
    \tilde{h}_{\alpha}(\xi_Q)^s \sum_{\cn} |s_{Q,\cn}| &|\eta_{Q,\cn}|\Big\}_Q
\Bigr\|_{L_p(\ell_q)}\\
    &\leq C\Bigl\|\Big\{
    \tilde{h}_{\alpha}(\xi_Q)^s |Q|^{1/2} \sum_{\ell=1}^4 M_r\Bigl(\sum_{\cn}
  |s_{Q,\cn}|
  \chi_{U(Q,\cn)}\Bigr)(R_{\ell}\cdot)\Big\}_Q
\Bigr\|_{L_p(\ell_q)}\\
&\leq C'\Bigl\|\Big\{
    \tilde{h}_{\alpha}(\xi_Q)^s |Q|^{1/2} \sum_{\cn}
  |s_{Q,\cn}| \chi_{U(Q,\cn)}\Big\}_Q\Bigr\|_{L_p(\ell_q)},
  \end{align*}
  where we used the (quasi-)triangle inequality and straightforward substitutions in the integrals.
The result now follows since the sum over $\cn$ is locally finite with a
uniform bound on the number of non-zero terms, which implies that
$$\Big(\sum_{\cn}
  |s_{k,\cn}| \chi_{U(Q,\cn)}\Big)^q\asymp \sum_{\cn}
  |s_{Q,\cn}|^q \chi_{U(Q,n)},$$
uniformly in $Q$.
\end{proof}

We now use Lemma \ref{prop:normchar} and Lemma
\ref{lem:reconstruct} to obtain the main result of this paper, that $\{w_{\cn,Q}\}$ forms
captures the norm of $\dot{F}^{s,\alpha}_{p,q}$, and forms an unconditional basis for  $\dot{F}^{s,\alpha}_{p,q}$ in the Banach space case.

\begin{mytheorem}\label{prop:modu}
Let $s\in \bR$, $0<p,q< \infty$. Then we have the norm characterization 
$$ \|f\|_{\dot{F}^{s,\alpha}_{p,q}}\asymp  \|\mathcal{S}_q^{s}(f)\|_{L_p},$$
with $\mathcal{S}_q^{s}(f)$ given by \eqref{eq:Sq}. 
Moreover, for $1\leq p,q<\infty$,  $\{w_{\cn,Q}\}$ forms an unconditional basis for $\mathcal{S}_q^{s}(f)$.
\end{mytheorem}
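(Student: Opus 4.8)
The plan is to derive the whole statement from the two quantitative estimates already in hand, Lemma~\ref{prop:normchar} (boundedness of the coefficient operator) and Lemma~\ref{lem:reconstruct} (boundedness of the reconstruction operator on finite sequences), together with the facts that $\{w_{\cn,Q}\}$ is an orthonormal basis for $L_2(\bR^2)$ and that $\mathcal{S}_0$ is dense in $\dot{F}^{s,\alpha}_{p,q}$ for $0<p,q<\infty$. Two preliminary remarks will be used. First, each functional $g\mapsto\langle g,w_{\cn,Q}\rangle$ is bounded and linear on $\dot{F}^{s,\alpha}_{p,q}$, being the composition of the bounded coefficient operator $C$ of \eqref{eq:coefficient} (Lemma~\ref{prop:normchar}) with a coordinate functional of the solid sequence space $\dot{f}^{s,\alpha}_{p,q}$. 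Second, since $0<p,q<\infty$ finitely supported sequences are dense in $\dot{f}^{s,\alpha}_{p,q}$, so by Lemma~\ref{lem:reconstruct} the reconstruction operator $R$ of \eqref{eq:reconstruct} extends uniquely to a bounded operator $R\colon\dot{f}^{s,\alpha}_{p,q}\to\dot{F}^{s,\alpha}_{p,q}$, for which $\sum_{Q,\cn}s_{Q,\cn}w_{\cn,Q}$ converges in $\dot{F}^{s,\alpha}_{p,q}$ to $R(\{s_{Q,\cn}\})$ whenever $\{s_{Q,\cn}\}\in\dot{f}^{s,\alpha}_{p,q}$, while Lemma~\ref{prop:normchar} supplies the bound $\|Cf\|_{\dot{f}^{s,\alpha}_{p,q}}=\|\mathcal{S}_q^s(f)\|_{L_p}$.

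Next I would establish $R\circ C=\mathrm{Id}$ on $\dot{F}^{s,\alpha}_{p,q}$, which yields the norm equivalence at once. Fix $f\in\mathcal{S}_0$. Since $\{w_{\cn,Q}\}$ is an orthonormal basis for $L_2(\bR^2)$, the partial sums of $\sum_{Q,\cn}\langle f,w_{\cn,Q}\rangle w_{\cn,Q}$ converge to $f$ in $L_2(\bR^2)$, hence in $\mathcal{S}'\backslash\mathcal{P}$; on the other hand $Cf\in\dot{f}^{s,\alpha}_{p,q}$ by the previous paragraph, so the very same partial sums converge in $\dot{F}^{s,\alpha}_{p,q}$ to $R(Cf)$, and therefore also in $\mathcal{S}'\backslash\mathcal{P}$ through the continuous embedding $\dot{F}^{s,\alpha}_{p,q}\hookrightarrow\mathcal{S}'\backslash\mathcal{P}$. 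Uniqueness of limits in $\mathcal{S}'\backslash\mathcal{P}$ forces $R(Cf)=f$. Thus the bounded operators $R\circ C$ and $\mathrm{Id}$ agree on the dense subspace $\mathcal{S}_0$, hence on all of $\dot{F}^{s,\alpha}_{p,q}$. Consequently $\|f\|_{\dot{F}^{s,\alpha}_{p,q}}=\|R(Cf)\|_{\dot{F}^{s,\alpha}_{p,q}}\lesssim\|Cf\|_{\dot{f}^{s,\alpha}_{p,q}}=\|\mathcal{S}_q^s(f)\|_{L_p}$, which combined with Lemma~\ref{prop:normchar} proves $\|f\|_{\dot{F}^{s,\alpha}_{p,q}}\asymp\|\mathcal{S}_q^s(f)\|_{L_p}$.

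For the basis statement take now $1\le p,q<\infty$, so that $\dot{F}^{s,\alpha}_{p,q}$ is a Banach space. The identity $R\circ C=\mathrm{Id}$ shows that every $f\in\dot{F}^{s,\alpha}_{p,q}$ has the $\dot{F}^{s,\alpha}_{p,q}$-norm-convergent expansion $f=\sum_{Q,\cn}\langle f,w_{\cn,Q}\rangle w_{\cn,Q}$; applying the continuous functional $g\mapsto\langle g,w_{\cn_0,Q_0}\rangle$ to any norm-convergent expansion of $f$ and using orthonormality of $\{w_{\cn,Q}\}$ forces its coefficients to coincide with the $\langle f,w_{\cn,Q}\rangle$, so $\{w_{\cn,Q}\}$ is a Schauder basis for $\dot{F}^{s,\alpha}_{p,q}$. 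Unconditionality follows from the solidity of the sequence space $\dot{f}^{s,\alpha}_{p,q}$: for arbitrary scalars $(\eta_{Q,\cn})$ with $|\eta_{Q,\cn}|\le1$ one has $\|\{\eta_{Q,\cn}\langle f,w_{\cn,Q}\rangle\}\|_{\dot{f}^{s,\alpha}_{p,q}}\le\|Cf\|_{\dot{f}^{s,\alpha}_{p,q}}$, so applying $R$ and invoking the norm equivalence yields $\bigl\|\sum_{Q,\cn}\eta_{Q,\cn}\langle f,w_{\cn,Q}\rangle w_{\cn,Q}\bigr\|_{\dot{F}^{s,\alpha}_{p,q}}\lesssim\|f\|_{\dot{F}^{s,\alpha}_{p,q}}$ uniformly in $(\eta_{Q,\cn})$; by the standard multiplier criterion this is exactly unconditionality of the basis $\{w_{\cn,Q}\}$ for $\dot{F}^{s,\alpha}_{p,q}$.

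The step I expect to be the crux is the identification $R(Cf)=f$ in the second paragraph: one must run the brushlet expansion simultaneously in the $L_2$-topology, where orthonormality is available, and in the $\dot{F}^{s,\alpha}_{p,q}$-quasinorm, where Lemma~\ref{lem:reconstruct} applies, and then reconcile the two limits inside $\mathcal{S}'\backslash\mathcal{P}$; one must also check that the abstract extension of $R$ to $\dot{f}^{s,\alpha}_{p,q}$ carried out in the first paragraph really is the map \eqref{eq:reconstruct}, i.e.\ that it sends the canonical unit vector indexed by $(Q,\cn)$ to $w_{\cn,Q}$. Everything else is soft functional analysis once the two boundedness lemmas are available; in particular the norm equivalence $\|f\|_{\dot{F}^{s,\alpha}_{p,q}}\asymp\|\mathcal{S}_q^s(f)\|_{L_p}$ is obtained by the same argument on the whole quasi-Banach range $0<p,q<\infty$, and the final ``unconditional basis'' assertion is for the space $\dot{F}^{s,\alpha}_{p,q}$ on the Banach range $1\le p,q<\infty$.
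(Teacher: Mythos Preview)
Your proposal is correct and follows essentially the same route as the paper: combine Lemma~\ref{prop:normchar} and Lemma~\ref{lem:reconstruct} to get the norm equivalence, then invoke the Banach-space structure together with uniqueness of coefficients (orthonormality) to obtain unconditionality. The paper's own proof is a two-sentence sketch of exactly this; you have simply written out the soft functional-analytic glue (extension of $R$ by density of finitely supported sequences, the identity $R\circ C=\mathrm{Id}$ via reconciling $L_2$- and $\dot{F}^{s,\alpha}_{p,q}$-limits in $\mathcal{S}'\backslash\mathcal{P}$, and the multiplier criterion for unconditionality) that the paper leaves implicit.

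One small caveat: the paper only records density of $\mathcal{S}_0$ in $\dot{M}^{s,\alpha}_{p,q}$, not in $\dot{F}^{s,\alpha}_{p,q}$, so strictly speaking your density step for $\dot{F}$ appeals to a fact not stated in the text; it holds by the same arguments (and is available in the references), but you may want to flag it.
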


\begin{proof}
  The norm characterization follows at once by combining Lemma \ref{prop:normchar} and Lemma \ref{lem:reconstruct}. The claim that the system forms an unconditional
basis when $1\leq p, q < \infty$ follows easily from the fact that  $\dot{F}^{s,\alpha}_{p,q}$ is a Banach space, and
that finite expansions in  $\{w_{\cn,Q}\}$ have uniquely determined coefficients giving us a norm characterization of such expansions using the $L_p$-norm of  $\mathcal{S}_q^{s}(\cdot)$.
\end{proof}

We conclude with a few remarks on  Theorem \ref{prop:modu}. 
\begin{itemize}
    \item[a.]
A  result similar to Theorem \ref{prop:modu}, but with a much simplified proof, holds true for the homogeneous $\alpha$-modulation spaces $\dot{M}^{s,\alpha}_{p,q}(\R^2)$. For this case, one can follow the approach  in \cite{Nielsen2010,AlJawahri2019}.
\item[b.] The norm characterisation obtained in Theorem \ref{prop:modu} may appear similar to the characterisation obtained for tight frames in \cite[Theorem 7.5]{AlJawahri2019}, but one should notice the important additional  fact that $\{w_{\cn,Q}\}$ forms an unconditional basis. This fact has significant implications for, e.g., $n$-term nonlinear approximation from $\{w_{\cn,Q}\}$, where the linear independence will allow one to prove inverse estimates of Bernstein type. Inverse estimates are currently out of reach for the redundant frames considered in \cite{AlJawahri2019}, see the discussion of this problem in \cite{MR1794807}. Approximation properties of   $\{w_{\cn,Q}\}$ will be considered in a future publication.
\end{itemize}

\appendix
\section{Some technical results}\label{s:app}
This appendix contains some results on vector-valued maximal functions needed for the analysis of the $\alpha$-TL spaces. 

For $0<r<\infty$, the Hardy-Littlewood maximal function is defined by
$$M_r u(x):=\sup_{t>0}\bigg(\frac 1 {|B(x,t)|}
\int_{{B}(x,t)} |u(y)|^r dy\bigg)^{1/r},\qquad u\in
L_{r,\text{loc}}(\bR^2).$$

For $0<p,q\leq \infty$, and  a sequence
$f=\{f_j\}_{j\in\bN}$ of $L_p(\bR^2)$ functions, we define the norm
$$\|f\|_{L_p(\ell_q)}:=\big\|\big(\sum_{j\in\bN} 
|f_j|^q\big)^{1/q}\big\|_{L_p(\bR^2)}.$$ 
Where there is no risk of ambiguity we will abuse notation and write $\|f_k\|_{L_p(\ell_q)}$ instead
of $\|\{f_k\}_k\|_{L_p(\ell_q)}$.

The vector-valued Fefferman-Stein maximal inequality gives the estimate (see
\cite[Chapters I\&II]{Stein1993})  
\begin{equation}
  \label{eq:fs}
  \|\{M_rf_j\}\|_{L_p(\ell_q)}\leq C_B\|\{f_j\}\|_{L_p(\ell_q)}
\end{equation}
 for  $r<q\leq \infty$ and $r<p<\infty$, $C_B:=C_B(r,p,q)$.

For $\Omega=\{\Omega_n\}$ a sequence of compact
subsets of $\bR^2$, we let
$$L_p^\Omega(\ell_q):=\{\{f_n\}_{n\in\bN}\in
L_p(\ell_q)\,|\,\supp(\hat{f}_n)\subseteq\Omega_n,\,\forall n\}.$$ 
For $\xi\in\bR^2$, we let $\brac{\xi}:=(1+|\xi|^2)^{1/2}$. Let $u(x)$ be a continuous function on $\bR^2$. We define, for $a,R>0$,
$$u^*(a,R;x):=\sup_{y\in\bR^2}
\brac{y}^{-a}|u(x-y/R)|,\qquad x\in\bR^2.$$

The following is a variation on Peetre's maximal estimate in a vector-valued setting.

\begin{myprop}\label{prop1}
    Suppose $0<p<\infty$ and
  $0<q\leq \infty$, and let $\Omega=\{T_k \mathcal{C}\}_{k\in\bN}$ be
  a sequence of compact subsets of $\bR^2$ generated by a family
  $\{T_k={t_k}Id \cdot+\xi_k\}_{k\in\bN}$ of invertible affine
  transformations on $\bR^2$, with $\mathcal{C}$ a fixed compact subset of
  $\bR^2$. If $0<r<\min(p,q)$, then there exists a constant $K$ such
  that
  \begin{equation}
    \label{eq:max1}
    \bigg\|\big\{ (f_k)^*(2/r,t_k;\cdot)
\big\}
\bigg\|_{L_p(\ell_q)}\leq K\|\{f_k\}\|_{L_p(\ell_q)},
  \end{equation}
for all $f\in L_p^\Omega(\ell_q)$, where $f=\{f_k\}_{k\in\bN}.$
\end{myprop}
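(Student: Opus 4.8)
\textbf{Proof plan for Proposition \ref{prop1}.}

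The plan is to reduce the vector-valued affine estimate to the classical scalar Peetre maximal theorem via a rescaling trick, and then apply the Fefferman--Stein inequality \eqref{eq:fs} term by term. First I would normalize: for each $k$, write $g_k(y):=f_k(t_k^{-1}y+\xi_k)$ — or rather factor out the affine map so that $\widehat{g_k}$ is supported in the \emph{fixed} compact set $\mathcal{C}$ (independent of $k$). The point of the common compact set $\mathcal{C}$ is that the Paley--Wiener/Nikolskii bounds on $g_k$ are uniform in $k$. Under this change of variables the quantity $(f_k)^*(2/r,t_k;x)$ transforms (up to the Jacobian, which cancels between the two sides of an $L_p(\ell_q)$ estimate after the substitution) into the \emph{standard} Peetre maximal function $g_k^{**}(x):=\sup_{y}\brac{y}^{-2/r}|g_k(x-y)|$ evaluated at the rescaled point; here the dilation factor $t_k$ inside the argument of $f_k$ in the definition of $u^*(a,R;\cdot)$ is precisely what is needed to make $R=t_k$ match the dilation in $T_k$, so that the rescaled object sees band-limit radius $\asymp 1$.

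Next, the classical pointwise Peetre estimate (see \cite[Chapters I\&II]{Stein1993} or the Peetre/Triebel literature) gives, for a single function $g$ with $\supp\widehat{g}\subseteq \mathcal{C}$ and any $0<r<\infty$,
\begin{equation*}
g^{**}(x)=\sup_{y\in\bR^2}\brac{y}^{-2/r}|g(x-y)|\leq C\,M_r g(x),\qquad x\in\bR^2,
\end{equation*}
with $C$ depending only on $\mathcal{C}$ and $r$ (this is where the exponent $2/r$, matching the dimension $d=2$, and the choice $r<\min(p,q)$ enter: $2/r>2/\min(p,q)\geq$ the dimension over the relevant integrability parameter, guaranteeing the defining supremum is finite and dominated by the $r$-maximal function). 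Applying this with $g=g_k$, reversing the change of variables, and noting that the constant is uniform in $k$, one obtains the pointwise bound
\begin{equation*}
(f_k)^*(2/r,t_k;x)\leq C\,(M_r f_k)(x),\qquad x\in\bR^2,\ k\in\bN,
\end{equation*}
again with $C$ independent of $k$ — here one uses that $M_r$ commutes appropriately with the affine rescaling (dilations and translations), so the uniform compact support of the $\widehat{g_k}$ is all that is required. Then take the $L_p(\ell_q)$ quasi-norm of both sides and invoke \eqref{eq:fs}, which applies precisely because $r<\min(p,q)\leq q$ and $r<p<\infty$; this yields $\|\{(f_k)^*(2/r,t_k;\cdot)\}\|_{L_p(\ell_q)}\leq C_B\,C\,\|\{f_k\}\|_{L_p(\ell_q)}$, which is \eqref{eq:max1}.

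The main obstacle, and the step deserving the most care, is the uniformity of the constant in the pointwise Peetre estimate across the family $\{T_k\}$: one must check that after the reduction to band-limit $\asymp 1$, the implied constant in $g_k^{**}\leq C M_r g_k$ genuinely depends only on the \emph{fixed} set $\mathcal{C}$ and not on the individual translations $\xi_k$ or dilation factors $t_k$. Translations are harmless (both $g^{**}$ and $M_r$ commute with translation), and isotropic dilations $t_k\,\mathrm{Id}$ rescale both quantities by the same factor, so the constant is dilation-invariant; the only genuine input is the standard fact that the Peetre maximal function of a function with Fourier support in a fixed compact set is controlled by its $r$-Hardy--Littlewood maximal function. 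Everything else — the change of variables, the cancellation of Jacobians in the $L_p(\ell_q)$ estimate, and the final application of Fefferman--Stein — is routine. (Note the hypothesis $0\le\alpha<1$ in the later lemmas is not needed here; Proposition \ref{prop1} is purely a statement about affine families of compact frequency sets.)
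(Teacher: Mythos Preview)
The paper does not prove Proposition~\ref{prop1}; it is simply recorded in the appendix as a known variant of Peetre's maximal inequality (the ambient references being \cite{Stein1993} and the standard Triebel literature). Your outline is the correct and standard argument, so there is nothing to compare against.

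Two small comments on the write-up. First, your initial formula $g_k(y)=f_k(t_k^{-1}y+\xi_k)$ conflates the \emph{frequency}-side shift $\xi_k$ with a spatial translation; the clean normalisation is to first remove the modulation, $m_k(x):=e^{-ix\cdot\xi_k}f_k(x)$ (which leaves $|f_k|$, and hence both the Peetre and Hardy--Littlewood maximal functions, untouched), and then dilate, $g_k(x):=m_k(x/t_k)$, so that $\supp\widehat{g_k}\subseteq\mathcal{C}$. Second, with this normalisation the ``cancellation of Jacobians in the $L_p(\ell_q)$ estimate'' you anticipate never actually arises: one obtains the \emph{pointwise} bound
\[
(f_k)^*(2/r,t_k;x)=g_k^{**}(t_kx)\leq C\,M_rg_k(t_kx)=C\,M_rm_k(x)=C\,M_rf_k(x),
\]
using only the dilation-invariance of $M_r$, and then Fefferman--Stein \eqref{eq:fs} applies directly. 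The remark about uniformity of $C$ in $k$ is exactly right and is the only substantive point.
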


Finally, we need the following vector-valued multiplier result.
For $s\in \bR_+$, we let
$$\|f\|_{H_2^s}:=\biggl( \int |\mathcal{F}^{-1}f(x)|^2 \brac{x}^{2s}
dx\biggr)^{1/2}$$
denote the Sobolev  norm.

\begin{myprop}\label{th:vecmult}
  Suppose $0<p<\infty$ and
  $0<q\leq \infty$, and let $\Omega=\{T_k \mathcal{C}\}_{k\in\bN}$ be
  a sequence of compact subsets of $\bR^2$ generated by a family
  $\{T_k={t_k}Id \cdot+\xi_k\}_{k\in\bN}$ of invertible affine
  transformations on $\bR^2$, with $\mathcal{C}$ a fixed compact subset of
  $\bR^2$. Assume $\{\psi_j\}_{j\in \bN}$ is a
  sequence of functions satisfying $\psi_j\in H^s_2$ for some $s>\frac
  \nu2+\frac{\nu}{\min(p,q)}$. Then there exists a constant $C<\infty$
  such that 
$$\|\{\psi_k(D)f_k\}\|_{L_p(\ell_q)} \leq
C\sup_j\|\psi_j(T_j\cdot)\|_{H^s_2}\cdot
\|\{f_k\}\|_{L_p(\ell_q)}$$
for all $\{f_k\}_{k\in \bN} \in L_p^\Omega(\ell_q)$.
\end{myprop}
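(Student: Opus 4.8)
The plan is to deduce the inequality from the vector-valued Peetre maximal estimate of Proposition \ref{prop1} via a single \emph{uniform} pointwise bound: for every $k$,
$$|\psi_k(D)f_k(x)|\le C\,\|\psi_k(T_k\cdot)\|_{H^s_2}\,(f_k)^*(2/r,t_k;x),\qquad x\in\bR^2,$$
with $C$ independent of $k$, valid whenever $0<r<\min(p,q)$ and $s>\tfrac{\nu}{2}+\tfrac{2}{r}$ (here $\nu=2$, so $\tfrac{2}{r}=\tfrac{\nu}{r}$). Since the hypothesis reads $s>\tfrac{\nu}{2}+\tfrac{\nu}{\min(p,q)}$, one first fixes such an $r$: because $\nu/(s-\nu/2)<\min(p,q)$, one may pick $r$ with $\nu/(s-\nu/2)<r<\min(p,q)$, so that simultaneously $r<\min(p,q)$ (needed for Proposition \ref{prop1}) and $s>\nu/2+2/r$ (needed below). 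Granting the pointwise bound, one factors out $\sup_j\|\psi_j(T_j\cdot)\|_{H^s_2}$ and applies Proposition \ref{prop1} to the band-limited sequence $\{f_k\}\in L_p^\Omega(\ell_q)$ to get
$$\|\{\psi_k(D)f_k\}\|_{L_p(\ell_q)}\le C\sup_j\|\psi_j(T_j\cdot)\|_{H^s_2}\,\|\{(f_k)^*(2/r,t_k;\cdot)\}\|_{L_p(\ell_q)}\le C'\sup_j\|\psi_j(T_j\cdot)\|_{H^s_2}\,\|\{f_k\}\|_{L_p(\ell_q)},$$
which is exactly the claim.

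To derive the pointwise bound I would fix once and for all a function $\Phi\in C_c^\infty(\bR^2)$ with $\Phi\equiv1$ on a neighborhood of $\mathcal{C}$, and set $\Phi_{T}:=\Phi\circ T^{-1}$, so that $\Phi_{T_k}\equiv1$ on a neighborhood of $T_k\mathcal{C}\supseteq\supp\hat f_k$. Then $f_k=\Phi_{T_k}(D)f_k$, hence $\psi_k(D)f_k=(\psi_k\Phi_{T_k})(D)f_k=\mathcal{F}^{-1}(\psi_k\Phi_{T_k})*f_k$. Estimating the convolution through $|f_k(x-y)|\le\brac{t_ky}^{2/r}(f_k)^*(2/r,t_k;x)$ gives
$$|\psi_k(D)f_k(x)|\le (f_k)^*(2/r,t_k;x)\int_{\bR^2}|\mathcal{F}^{-1}(\psi_k\Phi_{T_k})(y)|\,\brac{t_ky}^{2/r}\,dy.$$
The decisive observation is that this integral is affinely invariant. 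Writing $T_k=t_k\,\mathrm{Id}+\xi_k$ and $\eta_k:=\psi_k(T_k\cdot)\Phi$, one has $\psi_k\Phi_{T_k}=\eta_k\circ T_k^{-1}$, and the scaling identity $\mathcal{F}^{-1}(\eta_k\circ T_k^{-1})(y)=t_k^{\nu}e^{i\xi_k\cdot y}\mathcal{F}^{-1}\eta_k(t_ky)$ together with the substitution $w=t_ky$ collapses the integral to $\int_{\bR^2}|\mathcal{F}^{-1}\eta_k(w)|\,\brac{w}^{2/r}\,dw$, with no remaining dependence on $t_k$ or $\xi_k$.

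It remains to bound this normalized integral. Splitting $\brac{w}^{2/r}=\brac{w}^{s}\brac{w}^{2/r-s}$ and applying Cauchy–Schwarz,
$$\int_{\bR^2}|\mathcal{F}^{-1}\eta_k(w)|\,\brac{w}^{2/r}\,dw\le\Big(\int_{\bR^2}|\mathcal{F}^{-1}\eta_k(w)|^2\brac{w}^{2s}\,dw\Big)^{1/2}\Big(\int_{\bR^2}\brac{w}^{2(2/r-s)}\,dw\Big)^{1/2}=C_s\,\|\eta_k\|_{H^s_2},$$
where $C_s<\infty$ precisely because $s>\nu/2+2/r$ makes the weight integrable. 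Finally $\|\eta_k\|_{H^s_2}=\|\psi_k(T_k\cdot)\Phi\|_{H^s_2}\le C_\Phi\|\psi_k(T_k\cdot)\|_{H^s_2}$, since multiplication by the fixed Schwartz function $\Phi$ is bounded on $H^s_2$ with a constant depending only on $\Phi$ and $s$. As $C_s$ and $C_\Phi$ are independent of $k$, this establishes the uniform pointwise bound and hence the proposition.

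The genuinely delicate step is the pointwise bound, and within it the affine normalization: one must arrange that after passing from $\psi_k$ to $\psi_k\Phi_{T_k}$ and rescaling, the estimate depends on $\psi_k$ \emph{only} through $\|\psi_k(T_k\cdot)\|_{H^s_2}$, with all other constants uniform in $k$. Everything hinges on this, together with the sharp threshold $s>\nu/2+2/r$ that renders $\brac{w}^{2(2/r-s)}$ integrable; the freedom to take $r<\min(p,q)$ arbitrarily close to $\min(p,q)$ is exactly what lets the hypothesis $s>\nu/2+\nu/\min(p,q)$ supply this threshold. The only routine points to verify are that $f_k=\Phi_{T_k}(D)f_k$ holds for the relevant continuous band-limited $f_k$ and that the convolution integral converges absolutely, both of which follow from $\supp\hat f_k\subseteq T_k\mathcal{C}$ and a standard density argument.
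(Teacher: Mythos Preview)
The paper does not supply a proof of this proposition; it is merely stated in the appendix as a technical input, with the implicit reference being the general anisotropic multiplier theory developed in \cite{Borup2008}. Your argument is correct and is in fact the standard proof of such results: dominate $|\psi_k(D)f_k|$ pointwise by $\|\psi_k(T_k\cdot)\|_{H^s_2}\,(f_k)^*(2/r,t_k;\cdot)$ via a cutoff--convolution estimate, exploit the affine invariance of the weighted $L_1$ integral so that only the normalized Sobolev norm survives, use Cauchy--Schwarz to produce the threshold $s>\nu/2+2/r$, and then feed the resulting pointwise bound into Proposition~\ref{prop1}. The choice of $r$ with $\nu/(s-\nu/2)<r<\min(p,q)$ is exactly what links the hypothesis on $s$ to the admissible range in Proposition~\ref{prop1}, and your handling of the multiplication by the fixed cutoff $\Phi$ on $H^s_2$ is also correct.
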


The following Lemma was used in the proof of Lemma \ref{lem:reconstruct}.
\begin{mylemma}\label{lem:maxbound}
  Let $0<r\leq 1$. There exists a constant $C$ such that for any sequence $\{s_{Q,\cn}\}_{Q,\cn}$ we have
  $$\sum_{\cn} |s_{Q,\cn}||w_{n,Q}|(x)\leq C|Q|^{1/2} \sum_{\ell=1}^4 M_r\Bigl(\sum_{\cn}
  |s_{Q,\cn}| \chi_{U(Q,\cn)}\Bigr)(R_\ell x).$$
\end{mylemma}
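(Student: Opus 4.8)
The plan is to pass to the time-domain formula \eqref{eq:gw} for the brushlets and then dominate the resulting kernel by the Hardy--Littlewood maximal function in the classical way. Fix $Q=I\times J\in\mathcal{Q}^\alpha$. Since $w_{\cn,Q}=w_{n_1,I}\otimes w_{n_2,J}$ and each univariate factor is, by \eqref{eq:gw}, a modulated sum of two translated--dilated copies of the central bell function $g_I$ (resp.\ $g_J$), multiplying out the product of the two two-term brackets gives
$$|w_{\cn,Q}(x)|\le\tfrac12|Q|^{1/2}\sum_{\ell=1}^4\bigl|(g_I\otimes g_J)\bigl(\delta_Q(x-\cv_\ell(\cn))\bigr)\bigr|,$$
where $\cv_1(\cn):=\pi\delta_Q^{-1}(\cn+\ca)$ is precisely the centre of the set $U(Q,\cn)$ in \eqref{eq:QD}, and $\cv_\ell(\cn):=R_\ell\cv_1(\cn)$, $\ell=2,3,4$, with the diagonal sign involutions $R_1,\dots,R_4$ from the proof of Lemma~\ref{prop:normchar} ($R_1=\operatorname{Id}$, $R_2=-\operatorname{Id}$, $R_3u=\tilde u$, $R_4=-R_3$). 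Because each $R_\ell$ commutes with the diagonal matrix $\delta_Q$ and $R_\ell\cv_\ell(\cn)=\cv_1(\cn)$, one has $R_\ell x-\cv_1(\cn)=R_\ell(x-\cv_\ell(\cn))$, hence $|R_\ell x-\cv_1(\cn)|=|x-\cv_\ell(\cn)|$ and $R_\ell x\in U(Q,\cn)\iff|\delta_Q(x-\cv_\ell(\cn))|<1$. Cancelling the common factor $|Q|^{1/2}$ from the inequality to be proved, it therefore suffices to establish, for each fixed $\ell\in\{1,2,3,4\}$,
$$\sum_\cn|s_{Q,\cn}|\,\bigl|(g_I\otimes g_J)\bigl(\delta_Q(x-\cv_\ell(\cn))\bigr)\bigr|\le C\,M_rF(R_\ell x),\qquad F:=\sum_{\cn'}|s_{Q,\cn'}|\chi_{U(Q,\cn')}.$$

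For the kernel I would invoke the decay estimate \eqref{eq:gdecay}: for every $N\ge1$ there is a constant $C_N$, independent of the underlying interval, with $|g_I(t)|\le C_N(1+|t|)^{-N}$, and similarly for $g_J$. Since the rectangles of $\mathcal{Q}^\alpha$ have uniformly bounded eccentricity, indeed $r_Q\asymp R_Q\asymp|Q|^{1/2}$, one has $\min(|I|,|J|)\asymp|Q|^{1/2}$, so writing $d_\cn:=|x-\cv_\ell(\cn)|$,
$$\bigl|(g_I\otimes g_J)\bigl(\delta_Q(x-\cv_\ell(\cn))\bigr)\bigr|\le C_N\bigl(1+|\delta_Q(x-\cv_\ell(\cn))|\bigr)^{-N}\le C_N'\bigl(1+|Q|^{1/2}d_\cn\bigr)^{-N}.$$
For the maximal function I would, for each $\cn$, apply the definition of $M_r$ with the ball $B\bigl(R_\ell x,\,d_\cn+c_0|Q|^{-1/2}\bigr)$; by the eccentricity bound the constant $c_0\asymp1$ can be chosen so that this ball contains $U(Q,\cn)$, using $|R_\ell x-\cv_1(\cn)|=d_\cn$ and that $U(Q,\cn)$ is an ellipse of radii $\asymp|Q|^{-1/2}$ about $\cv_1(\cn)$. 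Since $F\ge|s_{Q,\cn}|\chi_{U(Q,\cn)}$ pointwise and $|U(Q,\cn)|\asymp|Q|^{-1}$, averaging $F^r$ over this ball yields
$$M_rF(R_\ell x)\ge\Bigl(\frac{|s_{Q,\cn}|^r\,|U(Q,\cn)|}{\bigl|B(R_\ell x,\,d_\cn+c_0|Q|^{-1/2})\bigr|}\Bigr)^{1/r}\gtrsim|s_{Q,\cn}|\bigl(1+|Q|^{1/2}d_\cn\bigr)^{-2/r}.$$

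Combining these two estimates gives $|s_{Q,\cn}|\lesssim M_rF(R_\ell x)\,(1+|Q|^{1/2}d_\cn)^{2/r}$, whence
$$\sum_\cn|s_{Q,\cn}|\bigl(1+|Q|^{1/2}d_\cn\bigr)^{-N}\lesssim M_rF(R_\ell x)\sum_\cn\bigl(1+|Q|^{1/2}d_\cn\bigr)^{2/r-N}.$$
The points $\{\cv_\ell(\cn)\}_{\cn\in\bN_0^2}$ form a translate of a quadrant of the lattice $\pi\delta_Q^{-1}\bZ^2$, whose mesh, measured in the rescaled variable $|Q|^{1/2}(\,\cdot\,)$, stays between two positive constants uniformly in $Q$ (again by the eccentricity bound); hence the last series is dominated by $\sum_{k\in\bZ^2}(1+c|k|)^{2/r-N}$, a finite number independent of $Q$ and $x$ as soon as $N>2/r+2$. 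Fixing such an $N=N(r)$ --- admissible since \eqref{eq:gdecay} holds for every $N\ge1$ --- and summing over $\ell=1,\dots,4$, one obtains the asserted bound. I expect the only genuine work to lie in the bookkeeping of the first step --- matching the four humps of $w_{\cn,Q}$ with the four reflected arguments $R_\ell x$ and checking that the geometry of $U(Q,\cn)$ lines up correctly --- together with verifying that the constant $c_0$ and the lattice sum above are uniform in $Q$; the analytic heart, namely dominating a rapidly decaying tensor kernel by $M_r$, is entirely routine once one exploits that $g_I$ has arbitrary polynomial decay by \eqref{eq:gdecay}.
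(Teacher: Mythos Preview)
Your argument is correct, and it follows a somewhat different route from the paper's. After the common first step---writing out the four humps via \eqref{eq:gw} and invoking \eqref{eq:gdecay} to get the pointwise bound $|w_{\cn,Q}(x)|\lesssim |Q|^{1/2}\sum_{\ell}(1+|Q|^{1/2}|x-\cv_\ell(\cn)|)^{-N}$---the paper proceeds by a dyadic annulus decomposition: it groups the indices $\cn$ into shells $A_j=\{\cn:2^{j-1}<|\pi(\cn+\ca)|\le 2^j\}$, uses the embedding $\ell^r\hookrightarrow\ell^1$ (valid precisely for $r\le1$) within each shell, and then converts the resulting $\ell^r$ sum into a single integral over a ball of radius $\asymp2^{j}|Q|^{-1/2}$, which is bounded by $M_r$; summing in $j$ requires only $N>2/r$. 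Your approach instead estimates each term individually: for each $\cn$ you choose a ball centred at $R_\ell x$ just large enough to contain $U(Q,\cn)$, read off $|s_{Q,\cn}|\lesssim M_rF(R_\ell x)(1+|Q|^{1/2}d_\cn)^{2/r}$ directly from the definition of $M_r$, and then close with the lattice sum $\sum_\cn(1+|Q|^{1/2}d_\cn)^{2/r-N}$, needing $N>2/r+2$.

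Both are standard manoeuvres. Your version is a touch more elementary---no dyadic bookkeeping, no use of $\ell^r\hookrightarrow\ell^1$---and in fact goes through for every $r>0$, not just $r\le1$; the price is the slightly larger exponent $N$, which is immaterial since \eqref{eq:gdecay} holds for all $N$. The paper's version is marginally sharper in the exponent and makes the role of the hypothesis $r\le1$ explicit.
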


\begin{proof}
  From \eqref{eq:gdecay} we have that 
  \begin{equation}\label{eq:west}|w_{n,Q}(x)|\leq
  C_N|Q|^{1/2}\sum_{\ell=1}^4\big(1+\big|
R_\ell\delta_{Q}x-\pi(\cn+\ca)\big|\big)^{-N},
\end{equation} 
for any $N>0$, with $C_N$ independent of $Q$,
where we use the same notation as in the proof of Lemma \ref{lem:reconstruct}.
 Fix $N>2/r$. We can,
  without loss of generality, suppose $x\in
  U(Q,\mathbf{0})$. 
    For $j\in \bN$, we let $A_j=\{ \cn\in \bN_0^2\colon 2^{j-1}<| \pi(\cn+\ca)|\leq 2^j\}$. Notice that $\cup_{\cn\in A_j}
    U(Q,\cn)$ is a bounded set contained in the ball
    ${B}(0,c2^{j+1}|Q|^{-1/2})$. Now,
    \begin{align*}
      \sum_{\cn\in A_j} |s_{Q,\cn}|\big(1+\big| 
\delta_{Q}x-&\pi(\cn+\ca)\big|\big)^{-N} \\&\leq C2^{-jN} \sum_{\cn\in
        A_j} |s_{Q,\cn}|\\
&      \leq C2^{-jN} \Bigl( \sum_{\cn\in
        A_j} |s_{Q,\cn}|^r\Bigr)^{1/r}\\
      &\leq C2^{-jN} |Q|^{1/r} \biggl( \int \sum_{\cn\in
        A_j} |s_{Q,\cn}|^r\chi_{U(Q,\cn)}(y)\, dy \biggr)^{1/r}\\
      &\leq CL^{1-r}2^{-jN} |Q|^{1/r} \biggl( 
\int_{{B}(0,c2^{j+1}|Q|^{-1/2})} \Bigl(\sum_{\cn\in
        A_j} |s_{Q,\cn}|\chi_{U(Q,\cn)}(y)\Bigr)^r \, dy\biggr)^{1/r}\\
      &\leq C'2^{-j(N-2/r)} M_r \Bigl(\sum_{\cn\in
       \bN_0^2} |s_{Q,\cn}|\chi_{U(Q,\cn)} \Bigr)(x).
    \end{align*}
We now perform the summation over $j\in \bN_0$ to obtain
$$    \sum_{\cn\in \bN_0^2} |s_{Q,\cn}|\big(1+\big| 
\delta_{Q}x-\pi(\cn+\ca)\big|\big)^{-N} 
\leq  CM_r \Bigl(\sum_{\cn\in
       \bN_0^2} |s_{Q,\cn}|\chi_{U(Q,\cn)} \Bigr)(x).$$
We then use the substitutions $x=R_\ell z$, $\ell=1,\ldots,4$, to cover all four terms on the RHS of \eqref{eq:west}, where we use the fact that $R_{\ell}$ and $\delta_Q$ commute. 
\end{proof}

\bibliographystyle{abbrv}

\end{document}